\def\hide#1{}
\def\old#1{}
\def\oop#1{}
\def\gap#1{}
\theoremstyle{plain}
\newtheorem{theorem}{Theorem}
\newtheorem{proposition}{Proposition}
\newtheorem*{theorem*}{Theorem}
\newtheorem{lemma}{Lemma}
\newtheorem{remark}{Remark}
\newtheorem{corollary}{Corollary}
\newtheorem{definition}{Definition}
\def \ZZ  {\Bbb Z}
\begin{document}


\title{On the coverings of closed orientable Euclidean manifolds $\mathcal{G}_{2}$ and $\mathcal{G}_{4}$.}
\author{
G.~Chelnokov\thanks{This work was supported by the Russian Foundation for Basic Research (grant 18-01-00036).}\\
{\small\em National Research University Higher School of Economics} \\
{\small\tt grishabenruven@yandex.ru }
\\ [2ex]
A.~Mednykh\thanks{This work was supported by the Russian Foundation
for Basic Research (grant 19-41-02005).}\\
{\small\em Sobolev Institute of Mathematics,} \\
{ \small\em Novosibirsk, Russia}\\
{\small\em Novosibirsk State University,}\\
{\small\em Novosibirsk, Russia}\\
{\small\tt mednykh@math.nsc.ru} }
\date{}
\maketitle \maketitle

\begin{abstract}
There are only 10 Euclidean forms, that is  flat closed three
dimensional manifolds: six are orientable and four are
non-orientable. The aim of this paper is to describe all types of
$n$-fold coverings over orientable Euclidean manifolds
$\mathcal{G}_{2}$ and $\mathcal{G}_{4}$, and calculate the numbers
of non-equivalent coverings of each type. We classify subgroups in
the fundamental groups $\pi_1(\mathcal{G}_{2})$ and
$\pi_1(\mathcal{G}_{4})$ up to isomorphism and calculate the numbers
of conjugated classes of each type of subgroups for index $n$. The
manifolds $\mathcal{G}_{2}$ and $\mathcal{G}_{4}$ are uniquely
determined among the others orientable forms by their homology
groups $H_1(\mathcal{G}_{2})=\ZZ_2\times \ZZ_2 \times \ZZ$ and
$H_1(\mathcal{G}_{4})=\ZZ_2 \times \ZZ$.

Key words: Euclidean form, platycosm, flat 3-manifold,
non-equivalent coverings, crystallographic group.

{\bf 2010 Mathematics Subject Classification:}  20H15,  57M10,
55R10.
\end{abstract}

\section*{Introduction}

Consider a manifold $\mathcal{M}$. Two coverings $p_1: \mathcal{M}_1
\to \mathcal{M}  $ and $p_2: \mathcal{M}_2 \to \mathcal{M}  $ are
said to be equivalent if there exists a homeomorphism $h:
\mathcal{M}_1 \to \mathcal{M}_2$ such that $p_1 =p_2 \circ h.$
According to the general theory of covering spaces, any $n$-fold
covering is uniquely determined by a subgroup of index $n$ in the
group $\pi_1(\mathcal{M})$. The equivalence classes of $n$-fold
covering of $\mathcal{M}$ are in one-to-one correspondence with the
conjugacy
 classes of subgroups of index $n$ in the fundamental group
 $\pi_1(\mathcal{M}).$ (See,
for example, \cite{Hatcher}, p.~67). A similar statement formulated
in the language of orbifolds is valid for branched coverings.

  In such a way the following two natural problems arise. The first one is to calculate the
  number $s_{\Gamma}(n)$
 of subgroups of given finite index $n$ in $\Gamma=\pi_1(\mathcal{M})$. The second problem is to find the number $c_{\Gamma}(n)$ of conjugacy
 classes of subgroups of index $n$ in $\Gamma$.

If $\mathcal{M}$  is a compact surface with nonempty boundary of
Euler characteristic $\chi(\mathcal{M}) = 1 - r,$ where $r \ge 0 $,
then its fundamental group $ \pi_1(\mathcal{M}) = F_r$ is the free
group of rank $r.$ For this case, M.~Hall \cite{Ha49} calculated the
number $s_\Gamma(n)$ and V.~A.~Liskovets \cite{Li71} found the
number $c_\Gamma(n)$ by using his own method for calculating the
number of conjugacy classes of subgroups in free groups. An
alternative approach for counting conjugacy classes of subgroups in
$F_r$  was suggested in \cite{KwakLee96}. The numbers $s_\Gamma(n)$
and $c_\Gamma(n)$  for the fundamental group of a closed surface
(orientable or not) were calculated in (\cite{Med78}, \cite{Med79},
\cite{MP86}). In the paper \cite{Medn}, a general method for
calculating the number $c_\Gamma(n)$  of conjugacy classes of
subgroups in an arbitrary finitely generated group $\Gamma$  was
given.  Asymptotic formulas for $s_\Gamma(n)$ in many important
cases were obtained by T.~W.~M\"uller and his collaborators
(\cite{Mul},  \cite{MulShar}, \cite{MulPuch}).

In the three-dimensional case, for a large class of Seifert fibrations, the value of $s_\Gamma(n)$ was determined in \cite{LisMed00} and \cite{LisMed12}. In the previous paper by the authors \cite{CheDerMed}, the numbers $s_\Gamma(n)$ and $c_\Gamma(n)$  were determined for the fundamental groups of non-orientable Euclidian manifolds   $\mathcal{B}_1$ and $\mathcal{B}_2$ whose homologies are ${H}_1(\mathcal{B}_1)= \mathbb{Z}_2\times\mathbb{Z}^2$ and ${H}_1(\mathcal{B}_2)=  \mathbb{Z}^2.$

 The aim of the present paper is to investigate
$n$-fold coverings over orientable Euclidean three dimensional
manifolds $\mathcal{G}_{2}$ and $\mathcal{G}_{4}$, whose homologies
are $H_1(\mathcal{G}_{2})=\ZZ_2\times \ZZ_2 \times \ZZ$ and
$H_1(\mathcal{G}_{4})=\ZZ_2 \times \ZZ$.  We classify subgroups of
finite index in the fundamental groups of $\pi_1(\mathcal{G}_{2})$
and $\pi_1(\mathcal{G}_{4})$ up to isomorphism and calculate the
numbers of conjugated classes of each type of subgroups for index
$n$.

 We note that numerical methods to solve these and similar problems for the three-dimensional crystallogical  groups were
developed by the Bilbao group \cite{babaika}. The first homologies of such groups are determined in \cite{Ratc}.

.

\subsection*{Notations}
According to \cite{Wolf}, there are six orientable Euclidean
3-manifold $\mathcal{G}_{1}$, $\mathcal{G}_{2}$, $\mathcal{G}_{3}$,
$\mathcal{G}_{4}$, $\mathcal{G}_{5}$, $\mathcal{G}_{6}$, and four
non-orientable ones $\mathcal{B}_{1}$, $\mathcal{B}_{2}$,
$\mathcal{B}_{3}$, $\mathcal{B}_{4}$. One can find the
correspondence between Wolf and Conway-Rossetti notations of the
Euclidean 3-manifold and its fundamental group in Table~1 in
\cite{CheDerMed}.

We  use the following notations: $s_{H,G}(n)$ is the number of
subgroups of index $n$ in the group $G$, isomorphic to the group
$H$; $c_{H,G}(n)$ is the number conjugacy classes of subgroups of
index $n$ in the group $G$, isomorphic to the group $H$. Also we
will need the following number-theoretic functions. In all cases we
consider the function
$$\sigma_0(n) = \sum_{k \mid n}1,\quad\sigma_1(n) =  \sum_{k \mid n} k,\quad \sigma_2(n) =  \sum_{k \mid n} \sigma_1(k),\quad \omega(n) = \sum_{k \mid n} k\sigma_1(k),$$
$$
\tau(n)=|\{(s,t)|s,t \in \ZZ, s>0, t\ge 0, s^2+t^2=n\}|.
$$
Also we suppose the above functions vanish if $n\not\in \Bbb{N} $.
For the first three, see for exapmle, (\cite{Apostol} Ch. 2.13., p.
38). For the functions $\omega(n)$ and $\tau(n)$ see the sequences
A001001 and A002654 in \cite{Ency}.

\section{The brief overview of achieved results}
Since the problem of enumeration of $n$-fold coverings reduces to
the problem of enumeration of conjugacy classes of some subgroups,
it is natural to expect that the enumeration of subgroups without
respect of conjugacy would be helpful. The manifold
$\mathcal{G}_{1}$ have the Abelian fundamental group $\ZZ^3$. Thus
the number of subgroups of a given finite index $n$ coincides with
the number of conjugacy classes and well known:
$$
s_{\ZZ^3,\ZZ^3}(n)=c_{\ZZ^3,\ZZ^3}(n)=\omega(n).
$$

In this paper we enumerate subgroups of a given finite index $n$ and
conjugacy classes of such subgroups with respect of their
isomorphism class in groups $\pi_{1}(\mathcal{G}_{2})$ and
$\pi_{1}(\mathcal{G}_{4})$. Similar results for manifolds
$\mathcal{B}_{1}$ and $\mathcal{B}_{2}$ are achieved in
\cite{CheDerMed}. Analogous results for other five Euclidean
3-manifolds are coming soon.

The first theorem provides the complete solution of the problem of
enumeration of subgroups of a given index in
$\pi_{1}(\mathcal{G}_{2})$.

\begin{theorem}\label{th-1-dicosm}
Every subgroup $\Delta$ of finite index $n$ in
$\pi_{1}(\mathcal{G}_{2})$ is isomorphic to either
$\pi_{1}(\mathcal{G}_{2})$ or $\ZZ^3$. The respective numbers of
subgroups are
$$
s_{\pi_1(\mathcal{G}_{2}), \pi_1(\mathcal{G}_{2})}(n) =
\omega(n)-\omega(\frac{n}{2}),\leqno (i)
$$
$$
s_{\ZZ^3, \pi_1(\mathcal{G}_{2})}(n) =  \omega(\frac{n}{2});\leqno
(ii)
$$
\end{theorem}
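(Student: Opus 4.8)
The plan is to realize $\pi_1(\mathcal{G}_2)$ concretely as a crystallographic group sitting in an exact sequence $1\to\ZZ^3\to\pi_1(\mathcal{G}_2)\to\ZZ_2\times\ZZ_2\to 1$, where the translation lattice $\ZZ^3$ is the maximal abelian normal subgroup and the point group $\ZZ_2\times\ZZ_2$ acts on it; concretely $\mathcal{G}_2$ is the ``didicosm'' with holonomy $\ZZ_2\times\ZZ_2$. A subgroup $\Delta$ of finite index $n$ meets the translation lattice $L\cong\ZZ^3$ in a sublattice $\Delta\cap L$, and the image of $\Delta$ in the point group $P=\ZZ_2\times\ZZ_2$ is some subgroup $Q\le P$. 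The first observation to record is the dichotomy: either $\Delta$ contains a translation-conjugate of one of the glide/screw generators (so that $\Delta$ still surjects nontrivially, or at least nontrivially enough, onto $P$ and is itself a Bieberbach group with the same holonomy as $\mathcal{G}_2$, hence isomorphic to $\pi_1(\mathcal{G}_2)$ by rigidity of flat manifolds in dimension $3$), or $\Delta$ is torsion-free with trivial holonomy image after passing to a finite-index sublattice, forcing $\Delta\cong\ZZ^3$. Establishing that \emph{only} these two isomorphism types occur is the structural heart of part of the claim, but for $\mathcal{G}_2$ this is essentially classical: every torsion-free finite-index subgroup of a $3$-dimensional Bieberbach group is again Bieberbach, and the only flat $3$-manifolds covering $\mathcal{G}_2$ that themselves cover it with the relevant holonomy constraints are $\mathcal{G}_2$ and the torus $\ZZ^3$. (The subtlety is ruling out the other two-element-holonomy platycosm $\mathcal{G}_2$ vs.\ $\mathcal{G}_3,\mathcal{G}_4$ as possible $\Delta$; this is forced because the holonomy \emph{action} on the lattice restricts to an action of the correct conjugacy type, and $\mathcal{G}_2$'s $\ZZ_2\times\ZZ_2$ action is not a restriction of anything but itself or a subaction killing torsion.)

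Next I would count. Write $d=[P:Q]\in\{1,2,4\}$ for the index of the holonomy image $Q$ of $\Delta$, and $m=[L:\Delta\cap L]$ for the lattice index, so $n=d\cdot m$... no: more carefully, $n=[\pi_1(\mathcal{G}_2):\Delta]=[P:Q]\cdot[L:\Delta\cap L]/1$ only when $\Delta\cap L$ is $Q$-invariant, which it always is, and then $n = [P:Q]\cdot m$ is \emph{not} quite right either because $\Delta$ may contain point-group elements acting on a finer lattice; the correct bookkeeping is $n=[\,\pi_1(\mathcal{G}_2):\Delta\,]$ and one stratifies by the pair $(Q,\Delta\cap L)$ together with the ``affine part'' (a class in a suitable $H^1$ or coset space) determining how $Q$ lifts. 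The case $\Delta\cong\ZZ^3$ is exactly $Q=\{1\}$; then $\Delta=\Delta\cap L$ must be a sublattice of $L$ of index $n$ that is \emph{not} contained in any larger subgroup already of index $<n$ with the same property --- but actually every index-$n$ sublattice of $L$ occurs, \emph{except} we must check it is genuinely a subgroup of $\pi_1(\mathcal{G}_2)$ of index exactly $n$ (automatic) and that it is not one of the $\mathcal{G}_2$-type subgroups (automatic, since those have $Q\ne 1$). The key arithmetic point is: a sublattice $\Lambda\le L$ of index $m$ is of the form $\Delta\cap L$ with $\Delta\cong\ZZ^3$ iff $\Lambda$ is $P$-invariant is \emph{false}; rather, $\Delta=\Lambda$ is already the whole subgroup, no invariance needed --- so $\Delta\cong\ZZ^3$ subgroups of index $n$ are in bijection with \emph{all} index-$n$ sublattices of $L$ that are sublattices of the specific lattice $L$, but every index-$n$ subgroup of $\pi_1(\mathcal{G}_2)$ landing in $L$ is automatically a sublattice of $L$ of index $n$, and conversely; hence the count is the number of index-$n$ subgroups of $\ZZ^3$ \emph{contained in $L$}, i.e.\ $\omega(n)$ --- but the answer is $\omega(n/2)$, so in fact the constraint ``$\Delta\subseteq L$'' together with $[\pi_1(\mathcal{G}_2):\Delta]=n$ forces $[\pi_1(\mathcal{G}_2):L]=2\mid[\pi_1(\mathcal{G}_2):\Delta]$... wait, $[\pi_1(\mathcal{G}_2):L]=|P|=4$. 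Let me instead say: $\Delta\subseteq L\Rightarrow 4=[\pi_1:L]\mid [\pi_1:\Delta]=n$, giving $\omega(n/4)$, still not matching. The resolution --- and this is the real computation --- is that $\Delta\cong\ZZ^3$ does \emph{not} force $\Delta\subseteq L$; it only forces $\Delta$ torsion-free abelian, and such $\Delta$ can have nontrivial holonomy image $Q$ provided the lift of $Q$ into $\Delta$ is by \emph{pure translations up to the $\Delta$-lattice}, which happens exactly when $\Delta\cap L$ is a sublattice on which the screw motions become translations; for $\mathcal{G}_2$ this forces $2\mid m$ in each relevant direction and yields precisely $\omega(n/2)$.

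So the step I expect to be genuinely delicate is the enumeration in (ii): counting torsion-free abelian (hence $\ZZ^3$) index-$n$ subgroups. The plan there is: fix the holonomy image $Q\le P$, note $\Delta\cap L$ must be $Q$-invariant \emph{and} must be chosen fine enough that the half-turn/screw generators of $\mathcal{G}_2$, when raised to appropriate powers and composed with lattice elements, lie in $\Delta$ as pure translations relative to the basepoint --- translating into a congruence condition ``$\Lambda\subseteq 2L'$'' for certain coordinate sublattices $L'$ --- and then count $Q$-invariant sublattices of $L$ of the appropriate index satisfying these congruences, summing over $Q$. Each such count is a divisor-sum manipulation (the lattice of $\ZZ^2$-invariant sublattices under the $\pm1$ action, etc.), and Möbius/telescoping over the subgroup lattice of $P$ should collapse the total to the clean closed form $\omega(n/2)$. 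Part (i) then follows for free: $s_{\pi_1(\mathcal{G}_2),\pi_1(\mathcal{G}_2)}(n)=s(n)-s_{\ZZ^3,\pi_1(\mathcal{G}_2)}(n)=\omega(n)-\omega(n/2)$, where $s(n)=\omega(n)$ is the \emph{total} number of index-$n$ subgroups of $\pi_1(\mathcal{G}_2)$ --- itself a fact one must prove, but it follows from the same stratification by $(Q,\Delta\cap L)$ summed without the torsion-free restriction, and the Bieberbach/Euler-characteristic constraint (index-$n$ subgroups of a crystallographic group have translation lattice of index $n\cdot|Q|/|P|$ inside $L$), the whole sum again telescoping to $\omega(n)$. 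The one external input I would lean on is rigidity of flat $3$-manifolds (Bieberbach's theorems) to pin down the isomorphism type from the holonomy data; everything else is lattice-counting bookkeeping organized by the four-element subgroup lattice of $\ZZ_2\times\ZZ_2$.
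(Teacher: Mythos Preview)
Your proposal rests on a factual error that derails the entire computation: $\mathcal{G}_2$ is the \emph{dicosm}, with holonomy $\ZZ_2$, not the didicosm $\mathcal{G}_6$ with holonomy $\ZZ_2\times\ZZ_2$. From the presentation (\ref{fund_G2}) one reads off the translation lattice $L=\langle x,y,z^2\rangle$ with $[\pi_1(\mathcal{G}_2):L]=2$, and the point group is generated by the image of $z$ acting as $-\mathrm{id}$ on $\langle x,y\rangle$. Once you correct $|P|$ from $4$ to $2$, your own line of reasoning gives the answer immediately: a finite-index $\Delta$ is abelian iff it contains no element with odd $z$-exponent iff $\Delta\subseteq L$, and then $s_{\ZZ^3,\pi_1(\mathcal{G}_2)}(n)=\omega(n/2)$ with no further congruence conditions. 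Your assertion that ``$\Delta\cong\ZZ^3$ does not force $\Delta\subseteq L$'' is wrong here: any element of $\Delta$ outside $L$ conjugates the rank-two lattice $\Delta\cap\langle x,y\rangle$ by $-\mathrm{id}$, so $\Delta$ would be nonabelian. The invariance and ``screw-motion-becomes-translation'' conditions you try to impose are artifacts of the wrong point group.

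Even with the holonomy corrected, your approach differs from the paper's. The paper does not use the Bieberbach sequence $1\to\ZZ^3\to\pi_1(\mathcal{G}_2)\to\ZZ_2\to1$ at all; it uses the sequence $1\to\Gamma\to\pi_1(\mathcal{G}_2)\to\ZZ\to1$ with $\Gamma=\langle x,y\rangle\cong\ZZ^2$, and parametrizes index-$n$ subgroups by triples $(a,H,\nu)$ where $a\mid n$ is the minimal positive $z$-exponent, $H\leqslant\Gamma$ has index $n/a$, and $\nu\in\Gamma/H$ is a coset (Proposition~\ref{propG2-2}). The isomorphism type is then governed solely by the parity of $a$, and the count $\sum_{a\mid n}\frac{n}{a}\sigma_1(\frac{n}{a})=\omega(n)$ splits over odd/even $a$ into $\omega(n)-\omega(n/2)$ and $\omega(n/2)$. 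This sidesteps Bieberbach rigidity entirely: the isomorphism $\Delta\cong\pi_1(\mathcal{G}_2)$ for odd $a$ is checked by exhibiting generators satisfying (\ref{fund_G2}) and verifying no extra relations. Your route via holonomy and lattice invariance can be made to work once the point group is fixed, but the paper's $\ZZ^2$-by-$\ZZ$ decomposition is both more elementary and better suited to the later conjugacy-class count in Theorem~\ref{th-2-dicosm}, where the parameter $\nu$ carries the conjugation action directly.
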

The next theorem provides the number of conjugacy classes of
subgroups of index $n$ in $\pi_{1}(\mathcal{G}_{2})$ for each
isomorphism type. That is the number of non-equivalent $n$-fold
covering $\mathcal{G}_{2}$, which have a prescribe fundamental
group.
\begin{theorem}\label{th-2-dicosm}
Let $\mathcal{N} \to \mathcal{G}_{2}$ be an $n$-fold covering over
$\mathcal{G}_{2}$. If $n$ is odd then $\mathcal{N}$ is homeomorphic
to $\mathcal{G}_{2}$. If $n$ is even then $\mathcal{N}$ is
homeomorphic to $\mathcal{G}_{2}$ or $\mathcal{G}_{1}$. The
corresponding numbers of nonequivalent coverings are given by the
following formulas:
$$
c_{\pi_{1}(\mathcal{G}_{2}),\pi_{1}(\mathcal{G}_{2})}(n)
=\sigma_2(n)+2\sigma_2(\frac{n}{2})-3\sigma_2(\frac{n}{4}). \eqno
(i)
$$
$$
c_{\ZZ^3,\pi_{1}(\mathcal{G}_{2})}(n) =
\frac{1}{2}\Big(\omega(\frac{n}{2})+\sigma_2(\frac{n}{2})+3\sigma_2(\frac{n}{4})\Big).
\eqno (ii)
$$
\end{theorem}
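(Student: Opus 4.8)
The plan is to work entirely inside the fundamental group $\Gamma = \pi_1(\mathcal{G}_2)$, using the explicit presentation of this Bieberbach group (the so-called dicosm) together with the structure established in Theorem~\ref{th-1-dicosm}. Recall that $\Gamma$ fits into a short exact sequence $1 \to \ZZ^3 \to \Gamma \to \ZZ_2 \to 1$, with the $\ZZ_2$ acting on the translation lattice $L \cong \ZZ^3$ by an involution that is $+1$ on one coordinate and $-1$ on the other two (an order-$2$ rotation with a screw component). Theorem~\ref{th-1-dicosm} already tells us that a finite-index subgroup $\Delta$ is isomorphic to $\ZZ^3$ precisely when $\Delta \subseteq L$ (the ``translational'' subgroups), and isomorphic to $\Gamma$ otherwise; moreover the counting function $\omega(n)$ records $s_{\ZZ^3,\ZZ^3}(n) = s_{\ZZ^3,\Gamma}(n)\cdot(\text{index correction})$, which is why $\omega(n/2)$ appears. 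So the task is to pass from the subgroup counts of Theorem~\ref{th-1-dicosm} to conjugacy-class counts.

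First I would count conjugacy classes of the translational subgroups, i.e. prove $(ii)$. A subgroup $\Delta \subseteq L$ of index $n$ in $\Gamma$ has index $n/2$ in $L$, and two such are conjugate in $\Gamma$ iff they lie in the same orbit under conjugation by $\Gamma/L \cong \ZZ_2$, i.e. under the involution $A$ acting on the set of index-$(n/2)$ sublattices of $\ZZ^3$. By Burnside/orbit-counting, the number of orbits is $\tfrac12\bigl(\omega(n/2) + \#\{\text{$A$-invariant index-}(n/2)\text{ sublattices}\}\bigr)$. The main sub-step here is to count $A$-invariant sublattices of $\ZZ^3$ of given index: since $A = \mathrm{diag}(1,-1,-1)$ in suitable coordinates, an $A$-invariant sublattice decomposes compatibly with the eigenspace splitting $\ZZ \oplus \ZZ^2$, but one must be careful because the eigenspaces do not span $\ZZ^3$ over $\ZZ$ (index-$2$ issues), and the $-1$-eigenlattice in $\ZZ^2$ contributes its own count. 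I expect this to reduce to a product of a $\sigma$-type sum over the $+1$-part and a count of $A$-fixed sublattices of $\ZZ^2$ over the $-1$-part; matching the bookkeeping so that the invariant count comes out to exactly $\sigma_2(n/2) + 3\sigma_2(n/4)$ is the key computation.

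Next, $(i)$ for the non-translational subgroups $\Delta \cong \Gamma$ follows by subtraction combined with a direct conjugacy analysis. One route: the total number of conjugacy classes of all index-$n$ subgroups can be computed by the general Burnside-type formula (as in \cite{Medn}), namely $c_\Gamma(n) = \tfrac1n \sum_{[\Delta]} |\Gamma : N_\Gamma(\Delta)|^{-1}(\dots)$ — but more practically, since $\Gamma/L \cong \ZZ_2$, any $\Delta \cong \Gamma$ satisfies $\Delta L = \Gamma$, so $\Delta$ is determined by its intersection $\Delta \cap L$ (an index-$(n/2)$ $A$-invariant-up-to-$\Delta$ sublattice) together with a coset choice, and conjugation is again governed by the $\ZZ_2$-action plus conjugation by $L$ itself. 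I would set up the count of these $\Delta$'s refined by $\Delta \cap L$, determine the size of each conjugacy orbit (it is $1$ or $2$ according to whether $\Delta \cap L$ is $A$-invariant, with further refinement from whether the screw parameter can be normalized away by an element of $L$), and sum. The arithmetic identity $\sigma_2(n) + 2\sigma_2(n/2) - 3\sigma_2(n/4)$ should emerge as (number of such $\Delta$) minus (corrections from non-singleton orbits), consistent with $s_{\pi_1(\mathcal{G}_2),\pi_1(\mathcal{G}_2)}(n) = \omega(n) - \omega(n/2)$ from Theorem~\ref{th-1-dicosm}.

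The hard part will be the conjugation bookkeeping for the non-translational subgroups: unlike the sublattice case, conjugation by translations $t \in L$ acts nontrivially on the ``screw'' coset data of $\Delta$ (it shifts the translation part of the rotational generator by $(A - I)t$), so one must carefully identify the orbit of each $\Delta$ under the combined action of $L$ (by conjugation, affecting the screw part) and the outer $\ZZ_2$ (affecting $\Delta \cap L$). Getting the stabilizers exactly right — in particular which $\Delta$ have normalizer strictly larger than $\Delta$, which is where the factor-of-$2$ discrepancies and the $\sigma_2(n/4)$ corrections come from — is the delicate point. I would organize this by fixing $M = \Delta \cap L$ first, counting the admissible $\Delta$ over each $M$, and only then quotienting by the residual symmetry; a sanity check throughout is that adding the two formulas and comparing with the known $\omega$-counts of Theorem~\ref{th-1-dicosm} under the index-doubling correspondence must be consistent.
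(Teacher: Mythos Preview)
Your plan for part~(ii) is essentially the paper's argument in different clothing: both reduce to Burnside counting for the $\ZZ_2$-action on index-$(n/2)$ sublattices of $L\cong\ZZ^3$, and your ``count of $A$-invariant sublattices'' is exactly the fixed-point term. The organizational difference is that the paper never works in $L=\ZZ^3$ directly; it uses the triple parametrization $(a,H,\nu)$ of Proposition~\ref{propG2-2}, where $a$ is the minimal $z$-exponent, $H=\Delta\cap\langle x,y\rangle\leq\ZZ^2$, and $\nu\in\ZZ^2/H$. In those coordinates conjugation by $z$ fixes $(a,H)$ and sends $\nu\mapsto-\nu$, so $A$-invariance becomes simply $2\nu=0$, and the fixed-point count is the function $S(m)=\sigma_1(m)+3\sigma_1(m/2)$ of Corollary~\ref{number of halfes}, summed over even $a\mid n$. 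Your eigenspace route would reach the same number, but with more work; incidentally the $\pm1$-eigenspaces of $A=\mathrm{diag}(1,-1,-1)$ \emph{do} span $\ZZ^3$ over $\ZZ$ --- the genuine subtlety is that $A$-invariant sublattices need not split along them.

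For part~(i) there is a real gap in your outline. You propose that the orbit size is ``$1$ or $2$ according to whether $\Delta\cap L$ is $A$-invariant, with further refinement from the screw parameter''. But for $\Delta\cong\pi_1(\mathcal{G}_2)$ with odd $a$ one has $\Delta\cap L=\langle H,\,z^{2a}\rangle$, and since $A$ acts as $-1$ on $\langle x,y\rangle$ and trivially on $z^2$, this is \emph{always} $A$-invariant --- your first criterion is vacuous. (Also $|L:\Delta\cap L|=n$, not $n/2$.) The entire orbit structure comes from what you call the ``further refinement'': conjugation by $t\in\langle x,y\rangle$ shifts $\nu$ by $(A-I)t=-2t$, while conjugation by $z$ sends $\nu\mapsto-\nu$, which already lies in the $2\ZZ^2$-translate orbit since $2\nu\in 2\ZZ^2\bmod H$. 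Hence the conjugacy class is determined precisely by $(a,H,\nu\bmod\langle 2x,2y,H\rangle)$, and the paper simply sums $|\ZZ^2/\langle 2x,2y,H\rangle|$ over odd $a\mid n$ and all $H$, again via Remark~\ref{number of halfes-remark}. The orbit sizes here are $(n/a)$ divided by $1$, $2$, or $4$, not merely $1$ or $2$; if you follow your stated dichotomy you will not recover the formula.
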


  \Cref{th-1-tetracosm} and \Cref{th-2-tetracosm} count  the numbers of
subgroups and the numbers of conjugacy classes of subgroups of index
$n$ in $\pi_{1}(\mathcal{G}_{4})$. That is this theorems are
analogues of \Cref{th-1-dicosm} and \Cref{th-2-dicosm} respectively
for the manifold $\mathcal{G}_4$. We will need one more
combinatorial function.


\begin{theorem}\label{th-1-tetracosm}
Every subgroup $\Delta$ of finite index $n$ in
$\pi_{1}(\mathcal{G}_{4})$ is isomorphic to either
$\pi_{1}(\mathcal{G}_{4})$, or $\pi_{1}(\mathcal{G}_{2})$, or
$\ZZ^3$. The respective numbers of subgroups are
$$
s_{\pi_1(\mathcal{G}_{4}), \pi_1(\mathcal{G}_{4})}(n) =\sum_{a \mid
n}a\tau(a)-\sum_{a \mid \frac{n}{2}}a\tau(a).\leqno (i)
$$
$$
s_{\pi_1(\mathcal{G}_{2}), \pi_1(\mathcal{G}_{4})}(n) =
\omega(\frac{n}{2})-\omega(\frac{n}{4}),\leqno (ii)
$$
$$
s_{\ZZ^3, \pi_1(\mathcal{G}_{4})}(n) = \omega(\frac{n}{4}) .\leqno
(iii)
$$
\end{theorem}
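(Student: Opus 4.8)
The plan is to proceed exactly as one would for $\mathcal{G}_2$, but now exploiting the extra structure of $\pi_1(\mathcal{G}_4)$. Recall that $\pi_1(\mathcal{G}_4)$ is the quarter-turn space group: it fits in a short exact sequence $1 \to \ZZ^3 \to \pi_1(\mathcal{G}_4) \to \ZZ_4 \to 1$, where the $\ZZ_4$ acts on the translation lattice $L \cong \ZZ^3$ by a matrix of order $4$ (a $90^\circ$ rotation in one coordinate plane, identity on the third axis). First I would fix an explicit presentation, say with generators $x, y, t, z$ where $x, y$ span the rotation plane, $z$ is the invariant translation, and $t$ is the screw element with $t^4 = z^{k}$ (some fixed power, determined by the manifold) and $txt^{-1} = y$, $tyt^{-1} = x^{-1}$. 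Given a finite-index subgroup $\Delta$, the key invariant is the image of $\Delta$ under the map to $\ZZ_4$: it is a subgroup of $\ZZ_4$, hence one of $\ZZ_4$, $\ZZ_2$, or the trivial group. I claim these three cases correspond precisely to $\Delta \cong \pi_1(\mathcal{G}_4)$, $\Delta \cong \pi_1(\mathcal{G}_2)$, and $\Delta \cong \ZZ^3$ respectively — because in each case $\Delta$ is again a torsion-free crystallographic group whose point group (the image of the induced action on $\Delta \cap L$) has order $4$, $2$, or $1$, and among Euclidean $3$-manifold groups that invariant, together with the structure of the action, pins down the isomorphism type. The case of point group $\ZZ_2$ needs a little care: one must check the order-$2$ action one obtains is the one defining $\mathcal{G}_2$ (homology $\ZZ_2 \times \ZZ_2 \times \ZZ$) rather than some non-orientable $\mathcal{B}_i$; this follows because $\Delta$ is orientation-preserving (it sits inside the orientation-preserving group $\pi_1(\mathcal{G}_4)$) and the square of the order-$4$ matrix is exactly the $\mathcal{G}_2$ rotation $\mathrm{diag}(-1,-1,1)$.

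Next I would count. Write $\Delta \cap L = \Lambda$, a finite-index sublattice of $L$, and let $m = [L : \Lambda]$, $d = [\pi_1(\mathcal{G}_4)/L : \Delta L / L] \in \{1,2,4\}$, so $n = md$. The enumeration splits into: (a) counting the admissible sublattices $\Lambda$ for each case, and (b) for each $\Lambda$, counting the complements/coset-representative choices, i.e. the ways to lift the quotient action. For part (iii), $d = 4$: here $\Delta \subseteq L$, so $\Delta = \Lambda$ is just a finite-index subgroup of $\ZZ^3$ of index $n/4$, and there are $\omega(n/4)$ of them — giving (iii) immediately. For parts (i) and (ii) I would use the standard fibre-counting: the sublattice $\Lambda$ must be invariant under the relevant rotation (order $4$ for (i), order $2$ for (ii)), and for an invariant sublattice the number of extensions of the point-group action to a subgroup of the given index is controlled by the coinvariants $L_\phi = L/(\phi - 1)L$ of the rotation $\phi$, exactly as in the $\mathcal{G}_2$ argument (Theorems 1.1–1.2). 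The new combinatorial ingredient, flagged in the text ("We will need one more combinatorial function"), is $\tau(n) = \#\{(s,t): s^2 + t^2 = n\}$: it enters because an order-$4$-invariant sublattice of $\ZZ^2$ is a $\ZZ[i]$-submodule of the Gaussian integers, and the number of index-$a$ ideals of $\ZZ[i]$ is $\sum_{k \mid a}\chi(k)$ with $\sum_{a \mid N}(\cdots)$-type sums repackaging into $\sum_{a\mid n} a\,\tau(a)$ after accounting for the free $z$-direction and the choice of screw translation mod $\Lambda$.

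Concretely, for (i) I would show: choosing $\Delta$ with full point group $\ZZ_4$ amounts to choosing (1) a $\ZZ_4$-invariant sublattice $\Lambda \subseteq L$ of index $a$ splitting as (Gaussian ideal in the plane) $\times$ (index-$(a/\text{planar part})$ subgroup of $\ZZ z$), contributing a factor counted by $a\tau(a)$ summed over $a \mid n$ with $n/a = 4$ forced — wait, more precisely $a$ ranges over divisors of $n$ and the residual index-$4$ data contributes the weight — and then (2) the screw element $t$ may be adjusted by an element of $L_\phi$, whose order is finite and divides out cleanly. Collecting terms yields $\sum_{a\mid n}a\tau(a)$ for the "would-be" total over all subgroups whose point group is $\ZZ_4$ \emph{or smaller in a compatible way}; the subtraction $-\sum_{a \mid n/2}a\tau(a)$ is inclusion–exclusion removing the subgroups that actually have point group of order dividing $2$ (these are the ones where the order-$4$ screw degenerates), which is why the $\mathcal{G}_2$-count appears with the telescoping $\omega(n/2) - \omega(n/4)$ in (ii) and $\omega(n/4)$ in (iii), matching Theorem 1.1 shifted by the index factor $2$ between $\mathcal{G}_4$ and $\mathcal{G}_2$. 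The main obstacle will be step (b) in the order-$4$ case: correctly counting the screw-translation lifts modulo the action, i.e. verifying that for each $\ZZ_4$-invariant $\Lambda$ the number of distinct subgroups $\Delta$ with $\Delta \cap L = \Lambda$ is exactly what makes the sums collapse to $a\tau(a)$ — this requires a careful analysis of $H^1(\ZZ_4; L/\Lambda)$ or, equivalently, of the solvability and number of solutions of the defining screw relation $t^4 = z^k$ modulo $\Lambda$, and distinguishing honest order-$4$ point groups from accidental degenerations. Everything else is bookkeeping with the divisor sums $\omega, \sigma_i, \tau$.
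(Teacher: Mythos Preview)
Your classification by the image of $\Delta$ in the point group $\ZZ_4$ is equivalent to the paper's criterion (which tracks $a(\Delta)\bmod 4$, where $a(\Delta)$ is the minimal positive $\tilde z$-exponent in $\Delta$), and your arguments for (ii) and (iii) are correct --- indeed, reducing (ii) to Theorem~\ref{th-1-dicosm} via the inclusion $\Delta\subseteq\langle\tilde x,\tilde y,\tilde z^2\rangle\cong\pi_1(\mathcal G_2)$ is slightly slicker than what the paper writes.

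For (i), however, the paper does \emph{not} work with the extension $1\to L\to\pi_1(\mathcal G_4)\to\ZZ_4\to 1$ with $L\cong\ZZ^3$. It uses instead the extension $1\to\Gamma\to\pi_1(\mathcal G_4)\to\ZZ\to 1$ with $\Gamma=\langle\tilde x,\tilde y\rangle\cong\ZZ^2$, parametrising subgroups by triples $(a,H,\nu)$ where $a$ generates the image in $\ZZ$, $H=\Delta\cap\Gamma$, and $\nu\in\Gamma/H$. When $a$ is odd one shows $H$ is $\ell$-invariant (Lemma~\ref{lemG4-1.1}), so $H$ is a Gaussian ideal and there are $\tau(n/a)$ choices; the coset $\nu$ is then \emph{unconstrained}, giving $n/a$ choices. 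Summing $\frac{n}{a}\tau(\frac{n}{a})$ over odd $a\mid n$ and rewriting as a difference of full divisor-sums yields the formula directly.

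Your route for (i) has two genuine gaps. First, $\ZZ_4$-invariant sublattices of $L\cong\ZZ^3$ do \emph{not} all split as (Gaussian ideal)$\times$(sublattice of the $z$-axis): take $\Lambda=\langle(1,1,0),(1,-1,0),(1,0,1)\rangle$, which is $\ell$-invariant of index $2$, yet $\Lambda\cap\ZZ^2_{xy}$ and $\Lambda\cap\ZZ_z$ each have index $2$, so the split sublattice would have index $4$. Your lattice count and your lift count would both need to handle this twisting, and the cohomological bookkeeping you flag as ``the main obstacle'' is real. Second, the subtraction $-\sum_{a\mid n/2}a\tau(a)$ is \emph{not} inclusion--exclusion over point groups: for $n=4$ the subgroups with point group contained in $\ZZ_2$ number $6+1=7$, whereas $\sum_{a\mid 2}a\tau(a)=3$. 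The subtraction is nothing more than the standard device for restricting a divisor-sum to odd divisors. Switching to the $\ZZ^2$-over-$\ZZ$ fibration eliminates both issues at once, because the invariance condition then lives on a two-dimensional lattice where it is exactly the Gaussian-ideal condition counted by $\tau$.
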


\begin{theorem}\label{th-2-tetracosm}
Let $\mathcal{N} \to \mathcal{G}_{4}$ be an $n$-fold covering over
$\mathcal{G}_{4}$. If $n$ is odd then $\mathcal{N}$ is homeomorphic
to $\mathcal{G}_{4}$. If $n$ is even but not divisible by 4 then
$\mathcal{N}$ is homeomorphic to $\mathcal{G}_{4}$ or
$\mathcal{G}_{2}$. Finely, if $n$ is divisible by 4 then
$\mathcal{N}$ is homeomorphic to one of $\mathcal{G}_{4}$,
$\mathcal{G}_{2}$ and $\mathcal{G}_{1}$. The corresponding numbers
of nonequivalent coverings are given by the following formulas:
$$
c_{\pi_1(\mathcal{G}_4),\pi_1(\mathcal{G}_4)}(n)=\sum_{a\mid
n}\tau(a) - \sum_{a\mid \frac{n}{4}}\tau(a) \leqno (i)
$$
$$
c_{\pi_1(\mathcal{G}_2),\pi_1(\mathcal{G}_4)}(n)=\frac{1}{2}\Big(\sigma_2(\frac{n}{2})+2\sigma_2(\frac{n}{4})-3\sigma_2(\frac{n}{8})+\sum_{a\mid
\frac{n}{2}} \tau(a)-\sum_{a\mid \frac{n}{8}} \tau(a)\Big), \leqno
(ii)
$$
$$
c_{\ZZ^3,\pi_1(\mathcal{G}_4)}(n)=\frac{1}{4}\Big(\omega(\frac{n}{4})+\sigma_2(\frac{n}{4})+3\sigma_2(\frac{n}{8})+2\sum_{a\mid\frac{n}{4}}\tau(a)+2\sum_{a\mid\frac{n}{8}}\tau(a)\Big).
\leqno (iii)
$$
\end{theorem}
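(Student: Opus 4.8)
The plan is to reduce the counting of conjugacy classes to the counting of subgroups (already settled by \Cref{th-1-tetracosm}) by analyzing the action of $\pi_1(\mathcal{G}_4)$ by conjugation on each isomorphism type of subgroup. Since $\mathcal{G}_4$ covers the flat torus $\mathcal{G}_1$, the group $\pi_1(\mathcal{G}_4)$ has a normal translation subgroup $L\cong\ZZ^3$ with quotient $\ZZ_4$ generated by the screw motion $\alpha$ of order $4$ acting on $L$. Conjugation by an element of $L$ moves a subgroup $\Delta$ only within its $L$-coset structure, so the conjugacy class of $\Delta$ is governed by (a) the image of $\Delta$ in $\ZZ_4$ and (b) the orbit of the ``translational part'' of $\Delta$ under the relevant finite group action. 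The first step is therefore to stratify the index-$n$ subgroups by their image in $\ZZ_4$ — this is exactly the trichotomy already identified: $\Delta\cong\pi_1(\mathcal{G}_4)$ iff $\Delta$ surjects onto $\ZZ_4$ (forcing $4\mid n$ unless... actually $\Delta$ of index $n$ with full image needs $n$ arbitrary but the translational index divisible appropriately), $\Delta\cong\pi_1(\mathcal{G}_2)$ iff the image is the order-$2$ subgroup (forcing $2\mid n$, and after passing to the index-$2$ overgroup one is in the $\mathcal{G}_2$ situation), and $\Delta\cong\ZZ^3$ iff $\Delta\subseteq L$ (forcing $4\mid n$).

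For each stratum the counting of conjugacy classes is a Burnside/orbit-counting computation. For the $\ZZ^3$ stratum, $\Delta\subseteq L$ has index $n/4$ in $L$ and index $n$ in $\pi_1(\mathcal{G}_4)$; two such are conjugate in $\pi_1(\mathcal{G}_4)$ iff they are in the same orbit under the $\ZZ_4$-action induced by $\alpha$ on finite-index sublattices of $L$ (conjugation by $L$ itself is trivial on sublattices of $L$). So $c_{\ZZ^3,\pi_1(\mathcal{G}_4)}(n)$ is the number of $\langle\alpha\rangle$-orbits on index-$(n/4)$ sublattices, which by Burnside equals $\frac14\sum_{j=0}^3 \operatorname{Fix}(\alpha^j)$; the identity contributes $\omega(n/4)$, the element $\alpha^2$ (the $-I$-type involution, whose fixed sublattices are counted by the $\sigma_2$-terms appearing already in \Cref{th-2-dicosm}) contributes the $\sigma_2(n/4)$ and $3\sigma_2(n/8)$ pieces, and $\alpha,\alpha^3$ (order-$4$ action, a rotation by $\pi/2$ on a $\ZZ^2$-summand) contribute the $\tau$-terms — this is where $\tau(n)$ enters, as it counts sublattices of $\ZZ^2$ fixed by a quarter-turn. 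This reproduces formula $(iii)$ after dividing by $4=|\langle\alpha\rangle|$.

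For the $\pi_1(\mathcal{G}_2)$-stratum, each such $\Delta$ lies in a unique index-$2$ subgroup $H$ with $H\cong\pi_1(\mathcal{G}_2)$ (the preimage of the order-$2$ subgroup of $\ZZ_4$, which is $\pi_1(\mathcal{G}_2)$ since $\mathcal{G}_2$ double-covers $\mathcal{G}_4$), $\Delta$ has index $n/2$ in $H$, and $\Delta$ does not lie in $L$. The conjugation action of $\pi_1(\mathcal{G}_4)$ on such $\Delta$ factors: $H$ acts (and \Cref{th-2-dicosm} already packages the $H$-conjugacy classes of index-$(n/2)$ subgroups of $H$ of the right type), while the outer $\ZZ_2=\pi_1(\mathcal{G}_4)/H$ acts by $\alpha$, pairing up $H$-classes. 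Hence $c_{\pi_1(\mathcal{G}_2),\pi_1(\mathcal{G}_4)}(n)=\frac12\big(c^{(2)}+\operatorname{Fix}(\alpha)\big)$ where $c^{(2)}$ is the relevant count from \Cref{th-2-dicosm} applied with parameter $n/2$ — this explains the $\sigma_2(n/2)+2\sigma_2(n/4)-3\sigma_2(n/8)$ block — and $\operatorname{Fix}(\alpha)$ is the number of $\alpha$-invariant such $H$-classes, which turns out to be a difference of $\tau$-sums $\sum_{a\mid n/2}\tau(a)-\sum_{a\mid n/8}\tau(a)$ (the $\alpha$-invariant ones being forced into a quarter-turn-symmetric configuration). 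Finally the top stratum $\Delta\cong\pi_1(\mathcal{G}_4)$: here $\Delta$ surjects onto $\ZZ_4$, so $\Delta$ is self-normalizing enough that conjugation by $L/(\Delta\cap L)$ already produces all conjugates and one checks the conjugacy class has size dividing $4$; counting orbits gives formula $(i)$, where the $\tau$-sum arises as the fixed-point count of the quarter-turn on the lattice data and the difference $\sum_{a\mid n}\tau(a)-\sum_{a\mid n/4}\tau(a)$ separates the genuinely order-$4$ subgroups from degenerate ones.

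The main obstacle I expect is the bookkeeping of the orbit-counting for the mixed strata: one must carefully identify, for each power $\alpha^j$, the set of index-$m$ sublattices of $\ZZ^3$ (resp. of the relevant $\ZZ^2$-factor) fixed by $\alpha^j$ together with a compatible choice of translational parameter, and verify that the affine (non-linear) part of the crystallographic group does not obstruct the expected fixed points — i.e. that a sublattice fixed by the linear part of $\alpha^j$ actually lifts to an $\alpha^j$-conjugation-invariant subgroup. This is the step where the specific screw-motion structure of $\pi_1(\mathcal{G}_4)$ (as opposed to a split extension) matters, and where a naive application of Burnside would give the wrong answer; reconciling the genuine fixed-point counts with the clean closed forms $\sigma_2$, $\omega$, $\tau$ is the technical heart of the argument.
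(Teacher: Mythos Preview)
Your overall plan coincides with the paper's: stratify by the image of $\Delta$ in $\ZZ_4$ (equivalently by $a(\Delta)\bmod 4$) and count conjugation orbits in each stratum. The paper does not name Burnside but uses the equivalent device of partitioning into families $\mathfrak{M}_1,\mathfrak{M}_2,\mathfrak{M}_4$ (resp.\ $\mathfrak{L}_1,\mathfrak{L}_2$) by conjugacy-class size and rewriting $|\mathfrak{M}_1|+\tfrac12|\mathfrak{M}_2|+\tfrac14|\mathfrak{M}_4|$ as $\tfrac12|\mathfrak{M}_1|+\tfrac14(|\mathfrak{M}_1|{+}|\mathfrak{M}_2|)+\tfrac14(|\mathfrak{M}_1|{+}|\mathfrak{M}_2|{+}|\mathfrak{M}_4|)$; the three bracketed sums are precisely your fixed-point counts for $\alpha$, $\alpha^2$, and the identity. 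Your reduction of case~(ii) to \Cref{th-2-dicosm} at index $n/2$ plus an outer $\ZZ_2$ is also what the paper does implicitly (its $|\mathfrak{L}_1|+|\mathfrak{L}_2|$ equals $c_{\pi_1(\mathcal{G}_2),\pi_1(\mathcal{G}_2)}(n/2)$), and your case~(iii) is identical to the paper's once one identifies index-$n$ subgroups of type $\ZZ^3$ with index-$(n/4)$ sublattices of $L$.

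Case~(i), however, contains a real error. It is true that when $\Delta$ surjects onto $\ZZ_4$ one has $\pi_1(\mathcal{G}_4)=L\Delta$, so every conjugate is $\Delta^l$ for some $l\in L$; but ``the conjugacy class has size dividing $4$'' is false --- the class has $[L:N_L(\Delta)]$ elements, which can be arbitrarily large (e.g.\ for $n=5$, $a=1$, each class has five elements), and no $\ZZ_4$-Burnside average applies in this stratum. The actual mechanism, which the paper uses, is: for odd $a$ the lattice $H(\Delta)$ is automatically $\ell$-invariant (\Cref{lemG4-1.1}), conjugation by $l\in\Gamma$ shifts $\nu(\Delta)$ by $(1-\ell)l$, and hence for fixed $(a,H)$ the number of conjugacy classes is $|\Gamma/\langle H,(1{-}\ell)\Gamma\rangle|=|\Gamma/\langle H,(1,-1),(1,1)\rangle|\in\{1,2\}$. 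By \Cref{number of quarters-remark} this coincides with the number of $\ell$-fixed cosets in $\Gamma/H$, so summing over odd $a\mid n$ and the $\tau(n/a)$ invariant lattices $H$ gives $\sum_{a\mid n,\,2\nmid a}R(n/a)$, which simplifies to formula~(i). Your heuristic (``separates the genuinely order-$4$ subgroups from degenerate ones'') does not describe this, and following the stated ``size dividing $4$'' picture would not produce the correct count.
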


In Appendix 1 we tabulate the functions given by theorems 1--4. In
Appendix 2 we present the obtained results in terms of Dirichlet
generating functions.

\section{Preliminaries}
Further we use the following representations for the fundamental
groups $\pi(\mathcal{G}_2)$ and $\pi(\mathcal{G}_4)$, see
\cite{Wolf} or \cite{Conway}.

\begin{equation}\label{fund_G2}
 \pi_{1}(\mathcal{G}_{2})=\langle x, y, z: xyx^{-1}y^{-1}=1,
zxz^{-1}= x^{-1}, zyz^{-1}=y^{-1}
 \rangle .
 \end{equation}
\begin{equation}\label{fund_G4}
 \pi_{1}(\mathcal{G}_{4})=\langle \tilde{x}, \tilde{y}, \tilde{z}: \tilde{x}\tilde{y}\tilde{x}^{-1}\tilde{y}^{-1}=1,
\tilde{z}\tilde{x}\tilde{z}^{-1}= \tilde{y},
\tilde{z}\tilde{y}\tilde{z}^{-1}=\tilde{x}^{-1}
 \rangle .
 \end{equation}

We widely use the following statement.

\begin{proposition}\label{number of sublattices}
The sublattices of index $n$ in the $2$-dimensional
lattice $\ZZ^2$ are in one-to-one correspondence with the matrices $\begin{pmatrix}   b & c \\
 0 & a \end{pmatrix}$, where $ab=n$, $0 \le c < b$. Consequently,
the number of such sublattices is $\sigma_1(n)$.

The sublattices of index $n$ in the $3$-dimensional
lattice $\ZZ^3$ are in one-to-one correspondence with the integer matrices $\begin{pmatrix} c & e & f \\
0 & b & d \\ 0 & 0 & a\end{pmatrix}$, where $a,b,c >0, \,abc=n$, $0
\le d < b$ and $0 \le f,e < c$. Consequently, the number of such
sublattices is $\omega(n)$.
\end{proposition}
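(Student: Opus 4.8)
The plan is to reduce everything to the Hermite normal form of an integer matrix. First I would record the dictionary between sublattices and matrices: a sublattice $L\subset\ZZ^m$ of index $n$ is a rank-$m$ free abelian subgroup, so writing any ordered $\ZZ$-basis of $L$ as the columns of an integer matrix $M$ gives $L=M\ZZ^m$ with $[\ZZ^m:L]=|\det M|=n$. Two matrices $M,M'$ describe the same sublattice precisely when $M'=MU$ for some $U\in\mathrm{GL}_m(\ZZ)$, a change of ordered basis of $L$. Hence the index-$n$ sublattices of $\ZZ^m$ are in bijection with the right $\mathrm{GL}_m(\ZZ)$-orbits on integer matrices of determinant $\pm n$, and it suffices to pick a canonical representative in each orbit.

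Second, I would invoke the existence and uniqueness of the Hermite normal form: every such orbit contains exactly one upper-triangular matrix with strictly positive diagonal entries in which every entry strictly above the diagonal is reduced modulo the diagonal entry of its own row. Existence is Gaussian elimination over $\ZZ$: within each column run the Euclidean algorithm to produce the pivot and clear the entries below it, make the diagonal positive by sign changes, then reduce the super-diagonal entries using the pivot columns to their left. Uniqueness follows because the subgroups $L_i=L\cap(\ZZ e_1\oplus\cdots\oplus\ZZ e_i)$, $0\le i\le m$, form an intrinsic filtration spanned by the first $i$ columns of the normal form: $M_{ii}$ is the positive generator of the image of $L_i/L_{i-1}$ in the $i$-th coordinate line, which pins down the diagonal, and then the $i$-th column is determined modulo $L_{i-1}$, hence recursively the whole matrix. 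For $m=2$ this canonical form is exactly $\begin{pmatrix}b&c\\0&a\end{pmatrix}$ with $a,b>0$, $0\le c<b$, and for $m=3$ exactly $\begin{pmatrix}c&e&f\\0&b&d\\0&0&a\end{pmatrix}$ with $a,b,c>0$, $0\le d<b$, $0\le e,f<c$; in both cases the index is the product of the diagonal entries.

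Third, I would count the canonical matrices. For $m=2$, for each factorization $ab=n$ with $a,b>0$ there are exactly $b$ admissible values of $c$, giving $\sum_{ab=n}b=\sigma_1(n)$. For $m=3$, for each ordered factorization $abc=n$ with $a,b,c>0$ there are $b$ choices for $d$ and $c$ choices for each of $e$ and $f$, i.e.\ $bc^2$ matrices; reorganizing the sum by the entry $a$ and writing $k=n/a$,
$$\sum_{abc=n}bc^2=\sum_{a\mid n}\Big(\sum_{bc=n/a}bc^2\Big)=\sum_{a\mid n}\Big(\sum_{c\mid n/a}\tfrac{n}{a}\,c\Big)=\sum_{a\mid n}\tfrac{n}{a}\,\sigma_1\!\Big(\tfrac{n}{a}\Big)=\sum_{k\mid n}k\,\sigma_1(k)=\omega(n).$$

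The only genuine work is the normal-form step, and within it the part I expect to be the main obstacle is uniqueness of the Hermite form; it is classical, and once it is in hand everything else is bookkeeping, with the $\omega(n)$ identity collapsing the moment the triple sum is grouped by the single diagonal entry $a$.
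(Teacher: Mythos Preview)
Your proposal is correct and follows essentially the same approach as the paper: both arguments amount to choosing the Hermite normal form as the canonical representative of each $\mathrm{GL}_m(\ZZ)$-orbit, obtained by Gaussian elimination. The paper's proof is terser---it simply states that injectivity and surjectivity of the correspondence are obvious and omits the counting computation for $\omega(n)$---whereas you spell out the uniqueness via the intrinsic filtration $L_i=L\cap(\ZZ e_1\oplus\cdots\oplus\ZZ e_i)$ and carry out the triple-sum identity explicitly, which is more than the paper does but not a different idea.
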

Despite this statement is well-known we will quote here its proof,
since we will need the technique for a number of more subtle
questions.

\begin{proof}
We prove the statement (ii), the proof of statement (i) is similar.
Consider the group $\ZZ^3=\{(x,y,z)| \; x,y,z \in \ZZ\}$, we use the
additive notation. Let $\Delta$ be a subgroup of index $n$ in
$\ZZ^3$.

Take some generators of the subgroup $\Delta$ in such a way that
they form an upper triangular matrix, that is $\Delta=\langle
\bar{u}, \bar{v},\bar{w} \rangle$ where $\bar{w}=(f,d,a)$,
$\bar{v}=(e,b,0)$, $\bar{u}=(c,0,0)$. This is a standard Gauss
procedure. Replacing $\bar{w}$ with $\bar{w}+i\bar{v}+j\bar{u}\;i,j
\in \ZZ$ we may assume $0\le d <b$ and $0\le f <c$. Similarly $0\le
e <c$. Since $\Delta$ is a subgroup of index $n$, holds $abc=n$.

Thus we got the map form the subgroups to matrices. Both the
injectivity and the surjectivity are obvious.
\end{proof}
\begin{corollary}\label{number of halfes}
 Given an integer $n$, by $S(n)$ denote the number
of pairs $(H,\nu)$, where $H$ is a subgrouup of index $n$ in $\ZZ^2$
and $\nu$ is a coset of $\ZZ^2/H$ with $2\nu=0$ (we use the additive
notation). Then
$$
S(n)=\sigma_1(n)+3\sigma_1(\frac{n}{2}).
$$
\end{corollary}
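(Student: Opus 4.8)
The plan is to stratify the set of pairs $(H,\nu)$ counted by $S(n)$ according to whether the coset $\nu$ is trivial. The pairs with $\nu=0$ are simply the sublattices $H$ of index $n$ in $\ZZ^2$, of which there are $\sigma_1(n)$ by \Cref{number of sublattices}(i). This already settles the case of odd $n$: then $\ZZ^2/H$ has odd order, so $2\nu=0$ forces $\nu=0$, while $\sigma_1(n/2)=0$, in agreement with the claimed formula. So it remains to count the pairs with $\nu\neq 0$.

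The key step is to identify those pairs with intermediate sublattices. If $\nu\in\ZZ^2/H$ satisfies $2\nu=0$ and $\nu\neq 0$, then $\langle\nu\rangle$ has order $2$; letting $M\subseteq\ZZ^2$ be its full preimage, we get $H\subset M\subset\ZZ^2$ with $[M:H]=2$. Conversely, for any lattice $M$ with $H\subset M\subset\ZZ^2$ and $[M:H]=2$, the group $M/H$ is cyclic of order $2$, so its unique non-trivial element is a coset $\nu$ with $2\nu=0$; and $M$ is recovered from $\nu$ as the preimage of $\langle\nu\rangle$, while $\nu$ is recovered from $M$ as the non-trivial element of $M/H$. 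Hence these two constructions are mutually inverse, giving a bijection between the pairs $(H,\nu)$ with $\nu\neq 0$ and the pairs of lattices $(H,M)$ with $H\subset M\subset\ZZ^2$, $[\ZZ^2:H]=n$, $[M:H]=2$.

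Finally I would count these pairs $(H,M)$ by choosing $M$ first. Necessarily $[\ZZ^2:M]=n/2$, so there is nothing unless $n$ is even, and then by \Cref{number of sublattices}(i) there are $\sigma_1(n/2)$ choices for $M$. Since $M$ is free abelian of rank $2$, its subgroups of index $2$ are in bijection with the index-$2$ sublattices of $\ZZ^2$, of which there are $\sigma_1(2)=3$, again by \Cref{number of sublattices}(i). This yields $3\sigma_1(n/2)$ pairs, and adding the $\sigma_1(n)$ pairs with $\nu=0$ gives $S(n)=\sigma_1(n)+3\sigma_1(n/2)$.

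The whole argument is short once the bijection in the middle step is in place; the only point needing care is to notice that $2\nu=0$ holds \emph{automatically} for the distinguished element of $M/H$ (immediate, as $|M/H|=2$) and that the assignments $\nu\mapsto M$ and $M\mapsto\nu$ really are inverse to one another. A more pedestrian alternative would be to write each $H$ as an upper-triangular matrix $\begin{pmatrix} b & c\\ 0 & a\end{pmatrix}$ with $ab=n$, $0\le c<b$ as in \Cref{number of sublattices}(i), compute $|(\ZZ^2/H)[2]|$ by a parity analysis of $a$, $b$ and $c$, and sum over $ab=n$; this works too but is longer, and I would keep it only as a numerical cross-check.
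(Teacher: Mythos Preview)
Your proof is correct and is essentially identical to the paper's own argument: both split into the cases $\nu=0$ and $\nu\neq 0$, and in the latter case pass to the intermediate lattice $M=\langle H,\nu\rangle$ (the paper calls it $G$), then count by first choosing $M$ of index $n/2$ and then one of the $\sigma_1(2)=3$ index-$2$ sublattices $H\subset M$. Your write-up is slightly more explicit about why the two constructions are mutually inverse, but there is no substantive difference in approach.
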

\begin{proof}
We have to enumerate pairs $(H,\nu)$ such that $H$ is a subgroup of
index $n$ in $\ZZ^2$ and $\nu$ is a coset in $\ZZ^2/H$ with
$2\nu=0$. Consider a pair of this type. Let $g$ be an arbitrary
representative of $\nu$. Put $G=\langle g,H\rangle$. Obviously, $G$
does not depends upon a choice of $g$.

Since $2\nu=0$, two cases are possible: either $G=H$ or $|G:H|=2$.
In the first case $\nu=0$, then $G$ uniquely identifies a pair
$(H,\nu)$. That is the number of pairs matching the first case is
$\sigma_1(n)$ by \Cref{number of sublattices}. In the second case
$|\ZZ^2:G|=\frac{n}{2}$. Then there are $\sigma_1(\frac{n}{2})$
choices of $G$. Also $G\cong\ZZ^2$ as a subgroup of finite index in
$\ZZ^2$. So, by \Cref{number of sublattices}, in each $G$ there are
$\sigma_1(2)=3$ subgroups $H$ with $|G:H|=2$. Note that a pair
$(G,H)$ uniquely identifies a pair $(H,\nu)$. That is the number of
pairs $(H,\nu)$ matching the second case is
$3\sigma_1(\frac{n}{2})$.
\end{proof}

Let us note the following fact.

\begin{lemma}\label{lemG4-1.2}\label{index_and_ker}
Let $H\leqslant G$ be an abelian group and its subgroup of finite
index. Let $\phi: G \to G$ be an endomorphism of $G$, such that
$\phi(H)\leqslant H$ and the index $|G:\phi(G)|$ is also finite.
Then the cardinality of kernel of $\phi: G/H \to G/H$ equals to the
index $|G : (H+\phi(G))|$.
\end{lemma}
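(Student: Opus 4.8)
The plan is to pass to the finite abelian quotient $Q:=G/H$ and read the statement off the first isomorphism theorem. Since $[G:H]$ is finite, $Q$ is a finite abelian group, so every subgroup and quotient appearing below is finite and orders are multiplicative along towers; in particular the hypothesis $|G:\phi(G)|<\infty$ is not really needed for this lemma, and I would simply remark on that (it is $\phi(H)\leqslant H$ together with $[G:H]<\infty$ that does the work).

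First I would check that $\phi$ descends: because $\phi(H)\leqslant H$, the assignment $g+H\mapsto\phi(g)+H$ is a well-defined endomorphism $\bar\phi\colon Q\to Q$, and its kernel is exactly the kernel referred to in the statement. Next I would identify its image. Directly from the definition, $\operatorname{im}\bar\phi=\{\phi(g)+H:g\in G\}=(\phi(G)+H)/H$.

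Then comes the counting step. Applying the first isomorphism theorem to $\bar\phi$ gives $|Q|=|\ker\bar\phi|\cdot|\operatorname{im}\bar\phi|=|\ker\bar\phi|\cdot[(\phi(G)+H):H]$. On the other hand, using the tower $H\leqslant H+\phi(G)\leqslant G$ one has $|Q|=[G:H]=[G:(H+\phi(G))]\cdot[(H+\phi(G)):H]$. Comparing the two expressions for $|Q|$ and cancelling the common finite factor $[(H+\phi(G)):H]$ yields $|\ker\bar\phi|=[G:(H+\phi(G))]$, which is precisely the assertion.

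There is essentially no obstacle here beyond keeping the finiteness bookkeeping straight — every index occurring in the argument divides the finite number $[G:H]$, so all cancellations are legitimate. The only content-bearing point is the identification $\operatorname{im}\bar\phi=(\phi(G)+H)/H$, and that is immediate from unwinding the definition of $\bar\phi$.
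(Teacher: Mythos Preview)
Your proof is correct and is essentially the same argument as the paper's: both compute $|\ker\bar\phi|=|G/H|/|\operatorname{im}\bar\phi|$ and then identify $|\operatorname{im}\bar\phi|=|(\phi(G)+H)/H|$, whence the result by the index tower. The only cosmetic difference is that the paper first writes the image as $\phi(G)/(\phi(G)\cap H)$ and then invokes the Second Isomorphism Theorem to rewrite it as $(\phi(G)+H)/H$, whereas you identify the image with $(\phi(G)+H)/H$ directly; your side remark that the hypothesis $|G:\phi(G)|<\infty$ is not actually used is also correct.
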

\begin{proof}
Indeed,
$$
|\ker_{\phi}(G/H)|=\frac{|G/H|}{|\phi(G)/(\phi(G)\bigcap H)|}
$$

By the Second Isomorphism Theorem
$\phi(G)/(\phi(G)\cap H)\cong
(\phi(G)+H)/H$. So
$$
|\ker_{\phi}(G/H)|=\frac{|G/H|}{(\phi(G)+H)/H}=|G/(\phi(G)+H)|.
$$
\end{proof}

\begin{remark}\label{number of halfes-remark}
Combining \Cref{index_and_ker} and \Cref{number of halfes} we get
the following observation. Given a subgroup $H\leqslant \ZZ^2$, the
number of $\nu \in \ZZ^2/H$, such that $2\nu=0$, is equal to
$|\ZZ^2/\langle (2,0), (0,2), H\rangle|$. Indeed, taking $\ZZ^2$ as
$G$, $H$ as $H$ and $\phi: g\to 2g,\, g\in \ZZ^2$ as $\phi$ one gets
the desired equality. Since for each $H$ the numbers $|\{\nu| \nu\in
\ZZ^2/H,\,2\nu=0\}|$ and $|\ZZ^2/\langle (2,0), (0,2), H\rangle|$
coincide, their sums taken over all subgroups $H$ also coincide,
that is
$$
S(n)=\sum_{H\leqslant \ZZ^2,\,|\ZZ^2/H|=n}|\{\nu| \nu\in
\ZZ^2/H,\,2\nu=0\}|=\sum_{H\leqslant \ZZ^2,\,|\ZZ^2/H|=n}
|\ZZ^2/\langle (2,0), (0,2), H\rangle|.
$$
\end{remark}

\begin{definition}\label{ell}
Consider the group $\ZZ^2$. By $\ell$ denote the automorphism $\ell:
\ZZ^2 \to \ZZ^2$ given by $(x,y) \to (-y,x)$.
\end{definition}

\begin{lemma}\label{lemG4-1.2}\label{lemG4-1.3}
A subgroup $H \leqslant \ZZ^2$ is preserved by $\ell$ if and only if
$H$ is generated by a pair of elements of the form $(p,q),(-q,p)$.
In this case $|\ZZ^2/H|=p^2+q^2$. For a given integer $n$ the number
of invariant under $\ell$  subgroups $H$ of index $n$ in $\ZZ^2$ is
given by $\tau(n)$.
\end{lemma}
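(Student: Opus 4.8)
The plan is to identify $\ZZ^2$ with the ring of Gaussian integers $\ZZ[i]$ via $(x,y)\mapsto x+yi$; under this identification the automorphism $\ell$ of \Cref{ell} becomes multiplication by $i$. Since a subgroup $H\leqslant\ZZ^2$ is automatically a $\ZZ$-submodule, it is closed under addition and under multiplication by every rational integer, so the condition $\ell(H)\subseteq H$ is equivalent to $H$ being closed under multiplication by $i$, hence by every element of $\ZZ[i]=\ZZ+\ZZ i$. Thus \emph{the $\ell$-invariant subgroups of $\ZZ^2$ are exactly the ideals of $\ZZ[i]$}. (Here $\ell$ has order $4$, so $\ell(H)\subseteq H$ already forces $\ell(H)=H$, matching the phrasing ``preserved by $\ell$''.)

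Next I would use that $\ZZ[i]$ is a Euclidean domain, hence a principal ideal domain, so every nonzero ideal $H$ has the form $(p+qi)\ZZ[i]$; as a $\ZZ$-module this is generated by $p+qi$ and $i(p+qi)=-q+pi$, that is, by $(p,q)$ and $(-q,p)$. Conversely, if $H=\langle(p,q),(-q,p)\rangle$ then $\ell(p,q)=(-q,p)\in H$ and $\ell(-q,p)=-(p,q)\in H$, so $H$ is $\ell$-invariant; this establishes the claimed characterisation. For the index, $H=\langle(p,q),(-q,p)\rangle$ is the image of $\ZZ^2$ under the matrix $\begin{pmatrix} p & -q \\ q & p \end{pmatrix}$, whose determinant is $p^2+q^2$, so $|\ZZ^2/H|=p^2+q^2$ — finite precisely when $(p,q)\neq(0,0)$, which is consistent with $n\in\Bbb N$.

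For the enumeration I would build a bijection between the $\ell$-invariant subgroups of index $n$ and the set $\{(s,t): s>0,\ t\ge 0,\ s^2+t^2=n\}$ counted by $\tau(n)$. Two Gaussian integers generate the same ideal exactly when they are associates, and the associates of a nonzero $z=p+qi$ are $z, iz, -z, -iz$, i.e.\ the four points $(p,q),(-q,p),(-p,-q),(q,-p)$. A short case analysis on the signs of $p$ and $q$ shows that exactly one of these four lies in the region $\{s>0,\ t\ge 0\}$; hence each ideal of norm $n$ has a unique generator of that form, distinct admissible pairs yield distinct ideals, and the total count is $\tau(n)$.

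The only genuine input is the classical fact that $\ZZ[i]$ is a PID; the index computation and the sign bookkeeping are routine, and this last case check is the one place where a little care is needed. If one prefers to avoid citing the PID property, it can be replaced by a direct argument: choose $v\in H$ of least nonzero norm, observe that $v$ and $\ell v$ are orthogonal of equal norm, and show via the usual covering-radius estimate ($\|w\|^2\le\tfrac12\|v\|^2$ after reducing modulo $\langle v,\ell v\rangle$) that $H=\langle v,\ell v\rangle$, which reproves the Euclidean property of $\ZZ[i]$ geometrically.
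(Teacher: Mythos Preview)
Your proof is correct and takes a genuinely different route from the paper. The paper argues geometrically: it picks a nonzero element $u=(p,q)\in H$ of minimal norm, forms $H_1=\langle u,\ell(u)\rangle$, and uses the covering radius of the resulting square lattice to rule out any $v\in H\setminus H_1$; for the count it argues that two distinct admissible pairs $(p,q)$, $(p',q')$ generating the same $H$ would produce a shorter nonzero vector $(p-p',q-q')\in H$. You instead identify $\ZZ^2$ with $\ZZ[i]$, recognise $\ell$ as multiplication by $i$, and invoke the PID property of $\ZZ[i]$ to get principality and hence the generating pair; the index comes from the determinant, and the enumeration from the fact that $\{s>0,\ t\ge 0\}$ is a fundamental domain for the unit group $\{\pm1,\pm i\}$ acting on $\ZZ[i]\setminus\{0\}$.

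What each approach buys: your algebraic argument is cleaner for the enumeration step, since ``exactly one associate per region'' is immediate once you know the units, whereas the paper's difference-vector argument for injectivity is a bit delicate. On the other hand, the paper's proof is entirely self-contained and needs no external input, while yours imports the Euclidean/PID property of $\ZZ[i]$. You already anticipated this trade-off: the alternative you sketch at the end (minimal-norm vector, orthogonal pair $v,\ell v$, covering-radius estimate) is precisely the paper's argument, stated with the sharper bound $\|w\|^2\le\tfrac12\|v\|^2$ rather than the paper's looser ``at most the side of the square''.
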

\begin{proof}
Suppose $H$ is generated by elements $(p,q)$ and
$(-q,p)=\ell\big((p,q)\big)$. Then obviously $\ell(H)=H$. Also
$|\ZZ^2/H|=p^2+q^2$, since $p^2+q^2$ is the number of integer points
in a fundamental domain of $H$.

Vice versa, suppose $\ell(H)=H$. Denote $d(x,y)=x^2+y^2$. Let
$u=(p,q)\in H\setminus\{0\}$ be an element with the minimal value of
$d(u)$. Consider the subgroup $H_1=\langle u,\ell(u) \rangle
\leqslant H$. Assume $H_1 \neq H$ and $v \in H\setminus H_1$. Since
$H_1=\langle u,\ell(u) \rangle$, the fundamental domain of $H_1$ is
a square with vertices $0,u,\ell(u),u+\ell(u)$. That means that the
plane splits into the squares of the form
$w,w+u,w+\ell(u),w+u+\ell(u), w \in H_1$. One of this squares
contains $v$. Note that the distance from a point inside a square to
one of its vertices is not greater then the side of this square.
This contradicts the minimality of $d(u)$, thus $H_1=H$.

%
To find the number of subgroups $H$ note that the
number of pairs $(p,q)$ with $p^2+q^2=n,\,p>0,\,q\ge0$ is $\tau(n)$.
As it was proven above, for each pair $(p,q)$ of the above type two
pairs $(p,q)$ and $\ell\big((p,q)\big)$ generate a subgroup $H$ of
the required type. Moreover $d(p,q)$ takes the minimal value among
$d(v),\,v\in H\setminus\{0\}$. Suppose two different pairs $(p,q)$
and $(p',q')$ correspond the same subgroup $H$. Then $(p-p',q-q')\in
H$ and $0<d(p-p',q-q')<d(p,q)$, which contradiction proves that
there is a one-to-one correspondence between pairs $(p,q)$ and
subgroups $H$.
\end{proof}

\begin{corollary}\label{number of quarters}
Let $n$ be an integer. Consider the set of all subgroups $H$ of
index $n$ in $\ZZ^2$, such that $\ell(H)= H$. By $R(n)$ denote the
sum over subgroups $H$ of the number of cosets $\nu \in \ZZ^2/H$
with $\ell(\nu)=\nu$. Then
$$
R(n)=\tau(n)+\tau(\frac{n}{2}).
$$
\end{corollary}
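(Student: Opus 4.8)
The plan is to mimic the structure of the proof of Corollary~\ref{number of halfes}, but now using the automorphism $\ell$ in place of the doubling map. Fix $n$ and consider a pair $(H,\nu)$ with $H\leqslant\ZZ^2$ of index $n$, $\ell(H)=H$, and $\ell(\nu)=\nu$. As before, pick a representative $g$ of $\nu$ and set $G=\langle g,H\rangle$. Since $\ell$ is linear and fixes both $H$ and $\nu$, it preserves $G$, so $G$ is again an $\ell$-invariant subgroup of $\ZZ^2$. The condition $\ell(\nu)=\nu$ means $\ell(g)-g\in H$, so inside $G/H$ the map $\ell$ acts trivially; equivalently $G/H$ is generated by the single element $\bar g$ fixed by $\ell$. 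This forces $|G:H|$ to be $1$ or $2$: if $|G:H|\ge 3$ then $G/H$ is a cyclic group of order $\ge 3$ on which $\ell$ acts as an automorphism of order dividing $4$ fixing a generator, hence acts trivially — but then $\ell$ would fix every coset, which is fine; the real constraint is that $\bar g$ together with $H$ generates $G$ and $\ell(\bar g)=\bar g$, and I must check this pins down $|G:H|\in\{1,2\}$. Here is the clean way: since $\ell(g)\equiv g\pmod H$ and $\ell^2=-\mathrm{id}$, we get $2g=g-(-g)=g-\ell^2(g)\equiv g-\ell(g)\cdot(\text{applied twice})$, more precisely $-g=\ell^2(g)\equiv\ell(g)\equiv g\pmod H$, so $2g\in H$. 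Thus $G/H$ has exponent dividing $2$ and is cyclic, hence has order $1$ or $2$.

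Now split into the two cases exactly as in Corollary~\ref{number of halfes}. If $G=H$ then $\nu=0$ and the pair $(H,\nu)$ is determined by $H$ alone; the number of such $H$ is $\tau(n)$ by Lemma~\ref{lemG4-1.3}. If $|G:H|=2$ then $|\ZZ^2:G|=\frac n2$, and $G$ is an $\ell$-invariant subgroup of index $\frac n2$, so there are $\tau(\frac n2)$ choices for $G$ by Lemma~\ref{lemG4-1.3} again. For each such $G\cong\ZZ^2$ I must count the index-$2$ subgroups $H\leqslant G$ that are $\ell$-invariant and such that the nonzero coset $\nu$ of $G/H$ is $\ell$-fixed — but since $|G/H|=2$ its unique nonzero coset is automatically fixed by any automorphism, so the requirement is just that $H$ is $\ell$-invariant of index $2$ in $G$. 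Transporting the $\ell$-action to $G$ via an isomorphism $G\cong\ZZ^2$ (which one checks conjugates $\ell$ to $\ell$, because $\ell$ restricted to an $\ell$-invariant finite-index sublattice is, in the generating pair $(p,q),(-q,p)$ from Lemma~\ref{lemG4-1.3}, again "rotation by a quarter turn"), the count of $\ell$-invariant index-$2$ subgroups of $G$ equals the number of $\ell$-invariant index-$2$ subgroups of $\ZZ^2$, which is $\tau(2)=1$. Finally, a pair $(G,H)$ reconstructs $(H,\nu)$ uniquely. Summing the two cases gives $R(n)=\tau(n)+\tau(\frac n2)$.

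The main obstacle is the bookkeeping in the second case: one must be sure that passing to the sublattice $G$ does not change the number of $\ell$-invariant index-$2$ subgroups, i.e.\ that the restriction of $\ell$ to $G$ is again conjugate to the standard quarter-turn on $\ZZ^2$. This follows from Lemma~\ref{lemG4-1.3}: $G=\langle(p,q),(-q,p)\rangle$, and the basis $\{(p,q),(-q,p)\}$ is sent by $\ell$ to $\{(-q,p),(-p,-q)\}=\{(-q,p),-(p,q)\}$, i.e.\ in this basis $\ell$ has matrix $\begin{pmatrix}0&-1\\1&0\end{pmatrix}$, the same as on $\ZZ^2$. Alternatively, and perhaps more slickly, one can invoke Lemma~\ref{index_and_ker} in the spirit of Remark~\ref{number of halfes-remark}: the number of $\ell$-fixed cosets $\nu\in\ZZ^2/H$ equals $|\ZZ^2/\langle(\ell-\mathrm{id})\ZZ^2,\,H\rangle|$, and summing this over all $\ell$-invariant $H$ of index $n$ reduces the corollary to evaluating $\sum_H|\ZZ^2/\langle\,\mathrm{im}(\ell-\mathrm{id}),H\,\rangle|$, which one can compute directly from the matrix description in Lemma~\ref{lemG4-1.3}. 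Either route reaches $\tau(n)+\tau(\frac n2)$; I would present the first (case-split) argument for consistency with Corollary~\ref{number of halfes}.
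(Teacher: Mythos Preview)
Your argument is correct, but it follows a genuinely different route from the paper's own proof. The paper fixes an $\ell$-invariant $H=\langle(p,q),(-q,p)\rangle$ with $p^2+q^2=n$ and solves the congruence $\ell(a,b)\equiv(a,b)\pmod H$ directly in coordinates, finding that modulo $H$ there are exactly two solutions when $p\equiv q\pmod 2$ (equivalently $2\mid n$) and exactly one otherwise; since there are $\tau(n)$ such $H$, this gives $R(n)=\tau(n)$ for $n$ odd and $R(n)=2\tau(n)$ for $n$ even, and the identity $\tau(n/2)=\tau(n)$ for even $n$ yields the stated formula. You instead mimic Corollary~\ref{number of halfes}: from $\ell^2=-\mathrm{id}$ and $\ell(g)\equiv g$ you deduce $2g\in H$, so the overgroup $G=\langle g,H\rangle$ satisfies $|G:H|\in\{1,2\}$, and you count the two cases separately, the key extra ingredient being that $\ell$ restricted to an $\ell$-invariant sublattice is conjugate (in the Lemma~\ref{lemG4-1.3} basis) to the standard quarter-turn, whence $\tau(2)=1$ index-$2$ invariant subgroups per $G$. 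The paper's approach is shorter and more explicit, and as a by-product gives the per-$H$ count of fixed cosets; your approach is more structural, keeps the presentation parallel to Corollary~\ref{number of halfes}, and avoids any coordinate manipulation at the cost of the small transport lemma for $\ell$. Either is fine; if you present yours, you can safely drop the initial false start about $|G:H|\ge 3$ and go straight to the $\ell^2=-\mathrm{id}$ argument.
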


\begin{proof}
Consider subgroup $H \leqslant \ZZ^2$ with $|\ZZ^2:H|=n$ and
$\ell(H)=H$. \Cref{lemG4-1.2} claims that $H$ has a pair of
generators $(p,q),(-q,p)$, where $p^2+q^2=n$. Suppose
$\ell(\nu)=\nu$ holds for some coset $\nu \in \ZZ^2/H$. Let
$(a,b)\in \ZZ^2$ be a representative of coset $\nu$. Then
$\nu-\ell(\nu)=(a-b,a+b)\in i(p,q)+j(-q,p)$. That is
$(a,b)=i(\frac{p+q}{2},\frac{q-p}{2})+j(\frac{p-q}{2},\frac{p+q}{2})$
for some integer $i,j$. Then modulo $\langle (p,q),(-q,p) \rangle$
there are only two different choices for pairs $(a,b)$: one
corresponding to $i=j=0$, another corresponding to $i=1,\,j=0$. The
first pair is always integer, the second one is integer if and only
if $p\equiv q \mod 2$. Also, $p\equiv q \mod 2$ if and only if
$2\mid p^2+q^2=n$. That is, for a fixed $H$ there is one choice of
$\nu$ if $2\nmid n$ and two choices if $2\mid n$. By
\Cref{lemG4-1.2}, the number of possible subgroups $H$ is $\tau(n)$.
So $R(n)=\tau(n)$ if $2 \nmid n$ and $R(n)=2\tau(n)$ if $2 \mid n$.
Finally note $\tau(\frac{n}{2})=\tau(n)$ if $2 \mid n$ and
$\tau(\frac{n}{2})=0$ otherwise. Then we have the required
$R(n)=\tau(n)+\tau(\frac{n}{2})$.
%
\end{proof}
\begin{remark}\label{number of quarters-remark}
For a fixed $H\leqslant \ZZ^2$ the number of $\nu \in \ZZ^2/H$ with
$\ell(\nu)=\nu$ coincides with $|\ZZ^2/\langle (1,-1), (1,1),
H\rangle|$. So
$$
R(n)=\sum_{H\leqslant \ZZ^2,\,|\ZZ^2/H|=n}|\{\nu| \nu\in
\ZZ^2/H,\,\ell(\nu)=\nu\}|=\sum_{H\leqslant \ZZ^2,\,|\ZZ^2/H|=n}
|\ZZ^2/\langle (1,-1), (1,1), H\rangle|.
$$
\end{remark}

\section{On the coverings of $\mathcal{G}_{2}$}\label{partG2}
\subsection{The structure of the group $\pi_{1}(\mathcal{G}_{2})$ }

The following proposition provides the canonical form of an element
in $\pi_{1}(\mathcal{G}_{2})$.

\begin{proposition}\label{propG2-1}
\begin{itemize}
\item[(i)] Each element of $\pi_{1}(\mathcal{G}_{2})$ can be represented in the canonical form $x^ay^bz^c$
for some integer $a,b,c$.
\item[(ii)] The product of two canonical forms is given by the
formula
\begin{equation}\label{multlawG2}
x^ay^bz^c \cdot x^{d}y^{e}z^{f}=x^{a+(-1)^cd}y^{b+(-1)^ce}z^{c+f}.
\end{equation}
\item[(iii)] The canonical epimorphism $\phi_{\mathcal{G}2}: \pi_{1}(\mathcal{G}_{2}) \to
\pi_{1}(\mathcal{G}_{2})/\langle x,y\rangle \cong \ZZ$, given by the
formula $x^ay^bz^c \to c$ is well-defined.
\item[(iv)] The representation in the canonical form $g=x^ay^bz^c$ for
each element $g\in \pi_{1}(\mathcal{G}_{2})$ is unique.
\end{itemize}
\end{proposition}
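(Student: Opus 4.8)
The plan is to obtain (i)--(iii) directly from the presentation \eqref{fund_G2} by elementary rewriting, and to reserve the real effort for (iv), which is the only substantive part: it asserts that the presentation does not identify distinct triples $(a,b,c)$, and to prove a lower bound of this sort I would exhibit an explicit group in which $\pi_1(\mathcal{G}_2)$ faithfully lives.

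\textbf{Parts (i) and (ii).} The last two relations of \eqref{fund_G2} yield the rewriting moves $zx=x^{-1}z$, $zx^{-1}=xz$, $z^{-1}x=x^{-1}z^{-1}$, $z^{-1}x^{-1}=xz^{-1}$, together with the analogous ones with $y$ in place of $x$, while the first relation says $x$ and $y$ commute. Using these, any occurrence of $z^{\pm1}$ in a word can be pushed to the right of every $x^{\pm1}$ and $y^{\pm1}$ (flipping that exponent's sign each time), after which the remaining powers of $x$ and $y$ are gathered; an induction on word length then brings an arbitrary element to the form $x^ay^bz^c$, which is (i). For (ii), I would first prove by induction on $|c|$ the conjugation identities $z^cx^dz^{-c}=x^{(-1)^cd}$ and $z^cy^ez^{-c}=y^{(-1)^ce}$, valid for all integers. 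Then
\[
x^ay^bz^c\cdot x^dy^ez^f
= x^ay^b\,\bigl(z^cx^dz^{-c}\bigr)\bigl(z^cy^ez^{-c}\bigr)\,z^{c}z^{f}
= x^ay^b\,x^{(-1)^cd}y^{(-1)^ce}\,z^{c+f},
\]
and collecting the commuting powers of $x$ and of $y$ gives \eqref{multlawG2}.

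\textbf{Part (iii).} By the universal property of a group presentation, the assignment $x\mapsto 0$, $y\mapsto 0$, $z\mapsto 1$ extends to a homomorphism $\phi_{\mathcal{G}2}\colon\pi_1(\mathcal{G}_2)\to\ZZ$ as soon as the three defining relators map to $0$, which is immediate. By (i) and (ii) this homomorphism sends $x^ay^bz^c\mapsto c$; it is onto, and its kernel is exactly the set of elements with $c=0$, that is $\langle x,y\rangle$ (which is therefore normal), so $\pi_1(\mathcal{G}_2)/\langle x,y\rangle\cong\ZZ$ as claimed.

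\textbf{Part (iv).} Let $\alpha\in\Aut(\ZZ^2)$ be $\alpha(m,n)=(-m,-n)$ and form the semidirect product $G=\ZZ^2\rtimes_\alpha\ZZ$, realized as the set of pairs $((m,n),k)$ with
\[
((m,n),k)\cdot((m',n'),k')=\bigl((m+(-1)^km',\,n+(-1)^kn'),\,k+k'\bigr).
\]
Sending $x\mapsto((1,0),0)$, $y\mapsto((0,1),0)$, $z\mapsto((0,0),1)$, one checks directly that all three relators of \eqref{fund_G2} hold in $G$ (the first because $\alpha^0$ is the identity and $(1,0),(0,1)$ commute; the other two because $\alpha$ negates both coordinates), so there is a homomorphism $\psi\colon\pi_1(\mathcal{G}_2)\to G$. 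A one-line computation gives $\psi(x^ay^bz^c)=((a,b),c)$. Hence if $x^ay^bz^c=x^{a'}y^{b'}z^{c'}$ in $\pi_1(\mathcal{G}_2)$, applying $\psi$ forces $(a,b,c)=(a',b',c')$, which proves (iv). Combined with (i), the map $(a,b,c)\mapsto x^ay^bz^c$ is a bijection $\ZZ^3\to\pi_1(\mathcal{G}_2)$ and $\psi$ is an isomorphism $\pi_1(\mathcal{G}_2)\cong\ZZ^2\rtimes_\alpha\ZZ$; one could equally use the Bieberbach action on $\mathbb{R}^3$ with $z\colon(u,v,w)\mapsto(-u,-v,w+1)$ and $x,y$ the first two unit translations. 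The main obstacle is precisely constructing this model $G$ and verifying the relators in it: once it is available, (iii) and (iv) are immediate, and (i), (ii) are routine rewriting.
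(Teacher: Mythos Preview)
Your proof is correct. The paper itself does not give a proof of this proposition at all: it simply states ``Routinely follows from the definition of the group'' and moves on. Your argument carries out precisely the kind of verification the authors deem routine; in particular, your construction of the semidirect product $\ZZ^2\rtimes_\alpha\ZZ$ to establish (iv) is the standard way to certify that the canonical form is unique, and is equivalent to recognising $\pi_1(\mathcal{G}_2)$ as a Bieberbach group acting on $\mathbb{R}^3$, which you also mention.
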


Routinely follows from the definition of the
group.
\smallskip
{\bf Notation.} By $\Gamma$ denote the subgroup of
$\pi_{1}(\mathcal{G}_{2})$ generated by $x,y$.
\smallskip

In the next definition we introduce the invariants, similar to those
used in \Cref{number of sublattices}.

\begin{definition}\label{defG2-invariants}
Suppose all elements of $\pi_{1}(\mathcal{G}_{2})$ are represented
in the canonical form. Let $\Delta$ be a subgroup of finite index
$n$ in $\pi_{1}(\mathcal{G}_{2})$. Put $H(\Delta)=\Delta\bigcap
\Gamma$. By $a(\Delta)$ denote the minimal positive exponent at $z$
among all the elements of $\Delta$. Choose an element $Z(\Delta)$
with such exponent at $z$, represented in the form
$Z(\Delta)=hz^{a(\Delta)}$, where $h \in \Gamma$. By
$\nu(\Delta)=hH(\Delta)$ denote the coset in coset decomposition
$\Gamma/H(\Delta)$.
By $Y(\Delta)$ and $X(\Delta)$
denote a pair of generators of $H(\Delta)$ of the form, provided by
\Cref{number of sublattices}, that is
$Y(\Delta)=x^{e(\Delta)}y^{b(\Delta)}$, $X(\Delta)=x^{c(\Delta)}$
where $0\le e(\Delta) < c(\Delta)$. Further we will omit $\Delta$
for $X(\Delta),Y(\Delta),Z(\Delta)$.
\end{definition}

Note that the invariants $a(\Delta)$, $H(\Delta)$ and $\nu(\Delta)$
are well-defined. In particular, the latter one does not depends on
a choice of $Z(\Delta)$. Also
$a(\Delta)[\Gamma:H(\Delta)]=[\pi_{1}(\mathcal{G}_{2}):\Delta]$.


\begin{definition}\label{defG2-3-plet}
A 3-plet $(a,H,\nu)$ is called {\em $n$-essential} if the following
conditions holds:
\begin{itemize}
\item[(i)] $a$ is a positive divisor of $n$,
\item[(ii)] $H$ is a subgroup of index $n/a$ in $\Gamma$,
\item[(iii)] $\nu$ is an element of $\Gamma/H$.
\end{itemize}
\end{definition}

\begin{lemma}\label{lemG2-2}
For arbitrary $n$-essential 3-plet $(a,H,\nu)$ there exists a
subgroup $\Delta$ in the group $\pi_1(\mathcal{G}_2)$ such that
$(a,H,\nu)=(a(\Delta),H(\Delta),\nu(\Delta))$.
\end{lemma}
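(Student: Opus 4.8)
The plan is to construct $\Delta$ explicitly from the data $(a,H,\nu)$ and then verify that its three invariants recover the given triple. First I would fix a representative $h\in\Gamma$ of the coset $\nu\in\Gamma/H$ and set
$$
\Delta=\langle H,\ hz^{a}\rangle.
$$
Since $\Gamma$ is normal in $\pi_1(\mathcal{G}_2)$ (it is the kernel of $\phi_{\mathcal{G}2}$) and conjugation by $z$ acts on $\Gamma\cong\ZZ^2$ as $(x,y)\mapsto(x^{-1},y^{-1})$, i.e. by $-\mathrm{id}$, this action preserves \emph{every} subgroup of $\Gamma$; in particular it preserves $H$. Hence $(hz^a)H(hz^a)^{-1}=h\,(z^aHz^{-a})\,h^{-1}=hHh^{-1}=H$ (the last equality because $\Gamma$ is abelian), so $H$ is normal in $\Delta$ and $\Delta/H$ is cyclic, generated by the image of $hz^a$.

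Next I would compute the index. Using the multiplication law \eqref{multlawG2} one checks that $(hz^a)^k\in\Gamma$ exactly when $a\mid ak$ forces nothing extra — rather, $(hz^a)^k=h'z^{ak}$ for some $h'\in\Gamma$, and this lies in $\Gamma$ never unless $ak=0$; more usefully, applying $\phi_{\mathcal{G}2}$ to $\Delta$ gives the subgroup $a\ZZ\leqslant\ZZ$, and $\Delta\cap\Gamma\supseteq H$. I would argue the reverse inclusion $\Delta\cap\Gamma\subseteq H$: any element of $\Delta$ has the form $h_0(hz^a)^k$ with $h_0\in H$, and its $z$-exponent is $ak$, which vanishes iff $k=0$, leaving an element of $H$. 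Therefore $\Delta\cap\Gamma=H$ and $[\pi_1(\mathcal{G}_2):\Delta]=[\ZZ:a\ZZ]\cdot[\Gamma:H]=a\cdot(n/a)=n$, using condition (ii) of $n$-essentiality. This simultaneously shows $H(\Delta)=H$.

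It remains to identify $a(\Delta)$ and $\nu(\Delta)$. By the index computation, the $z$-exponents occurring in $\Delta$ are exactly the multiples of $a$, so the minimal positive one is $a$, giving $a(\Delta)=a$; and since $a$ is a positive divisor of $n$ by condition (i), this is consistent. Then $Z(\Delta)$ may be taken to be $hz^a$ itself (any element of $\Delta$ with $z$-exponent $a$ is $h_0hz^a$ with $h_0\in H$, hence has $\Gamma$-part in the coset $hH$), so $\nu(\Delta)=hH=\nu$. The main obstacle — really the only place needing care — is the index bookkeeping in the non-abelian group: one must be sure that passing to the quotient $\ZZ$ and to the subgroup $\Gamma$ multiplies indices correctly and that no ``collapsing'' occurs, i.e. that $\Delta\cap\Gamma$ is no larger than $H$. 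This is handled by writing a general element of $\Delta$ in the normal form $h_0(hz^a)^k$ and reading off its image under $\phi_{\mathcal{G}2}$, which is routine given Proposition~\ref{propG2-1}.
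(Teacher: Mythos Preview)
Your argument is correct and in fact constructs the same subgroup as the paper does. The paper fixes a representative $h\in\nu$ and writes $\Delta$ out explicitly as a union of $H$-cosets, splitting on the parity of $a$: for odd $a$ it takes $\bigcup_l hz^{(2l+1)a}H\cup\bigcup_l z^{2la}H$, for even $a$ it takes $\bigcup_l h^lz^{la}H$, and then leaves to the reader the direct check that this set is a subgroup of index $n$. Your uniform definition $\Delta=\langle H,\,hz^a\rangle$ yields exactly these coset descriptions once one computes the powers $(hz^a)^k$ from \eqref{multlawG2} (for odd $a$ one has $(hz^a)^2=z^{2a}$, for even $a$ one has $(hz^a)^k=h^kz^{ka}$), so the two constructions coincide. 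Your presentation avoids the parity case split and actually carries out the verifications $H(\Delta)=H$, $a(\Delta)=a$, $\nu(\Delta)=\nu$ that the paper leaves implicit; the paper's explicit coset listing, on the other hand, feeds marginally more directly into the isomorphism-type analysis in Proposition~\ref{propG2-2}.
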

\begin{proof}
Let $h$ be a representative of the coset $\nu$. In case $a$ is odd
consider the set
$$
 \{hz^{(2l+1)a}H| l\in \ZZ\}\bigcup
\{z^{2la}H|l\in \ZZ\}.
$$
This set is a subgroup of index $n$ in $\pi_1(\mathcal{G}_2)$, this
can be proven directly.

Similarly, in case $a$ is even the set
$$
 \{h^lz^{la}H|l\in \ZZ\}
$$
form a subgroup of index $n$ in $\pi_1(\mathcal{G}_2)$.
\end{proof}

\begin{proposition}\label{propG2-2}
There is a bijection between the set of $n$-essential 3-plets
$(a,H,\nu)$ and the set of subgroups $\Delta$ of index $n$ in
$\pi_1(\mathcal{G}_2)$ given by the correspondence $\Delta
\leftrightarrow (a(\Delta),H(\Delta),\nu(\Delta))$. Moreover,
$\Delta\cong \pi_{1}(\mathcal{G}_{1})$  if $a(\Delta)$ is even and
$\Delta\cong \pi_{1}(\mathcal{G}_{2})$ if $a(\Delta)$ is odd.
\end{proposition}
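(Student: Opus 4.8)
The plan is to establish the bijection in two directions and then compute the isomorphism type from $a(\Delta)$. First I would note that \Cref{lemG2-2} already produces, for every $n$-essential 3-plet $(a,H,\nu)$, at least one subgroup $\Delta$ realizing it as $(a(\Delta),H(\Delta),\nu(\Delta))$; so the assignment $\Delta\mapsto(a(\Delta),H(\Delta),\nu(\Delta))$ is surjective onto the set of $n$-essential 3-plets. The fact that $(a(\Delta),H(\Delta),\nu(\Delta))$ really is $n$-essential follows from the remarks after \Cref{defG2-invariants}: $a(\Delta)$ divides $n$ because $a(\Delta)[\Gamma:H(\Delta)]=n$, $H(\Delta)=\Delta\cap\Gamma$ has index $n/a(\Delta)$ in $\Gamma$, and $\nu(\Delta)$ is by construction a coset in $\Gamma/H(\Delta)$. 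So the map is well-defined and surjective, and the real content is injectivity.

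For injectivity I would argue that a subgroup $\Delta$ is completely recovered from its 3-plet. Given $(a,H,\nu)$, set $Z=hz^{a}$ for any representative $h\in\Gamma$ of $\nu$. I claim $\Delta=\langle H, Z\rangle$. Indeed, $\Delta$ contains $H=\Delta\cap\Gamma$ and contains some element of the form $h'z^{a}$ with $h'H=\nu$ (by definition of $a(\Delta)$ and $\nu(\Delta)$), hence contains $Z^{-1}(h'z^{a})\in\Gamma$, which lies in $\Delta\cap\Gamma=H$; thus $h'z^{a}\in\langle H,Z\rangle$. Conversely any element of $\Delta$ has the form $w z^{ka}$ with $w\in\Gamma$ (since the image of $\Delta$ under $\phi_{\mathcal{G}2}$ is $a\ZZ$, using \Cref{propG2-1}(iii) and minimality of $a(\Delta)$), and modulo $Z^{k}$ it lies in $\Delta\cap\Gamma=H$; so $\Delta\subseteq\langle H,Z\rangle$. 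This shows $\Delta=\langle H,Z\rangle$ is determined by $(a,H,\nu)$ — the choice of representative $h$ only changes $Z$ by an element of $H$, leaving $\langle H,Z\rangle$ unchanged — so the correspondence is injective, hence bijective. The one point needing a little care is why $\phi_{\mathcal{G}2}(\Delta)=a(\Delta)\ZZ$ exactly rather than merely $\subseteq$; this is just the division algorithm on exponents of $z$ together with minimality of $a(\Delta)$, combined with the multiplication law \eqref{multlawG2} to see that products and inverses keep the $\Gamma$-part inside $\Gamma$.

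Finally, for the isomorphism type, I would use the explicit presentations from \Cref{lemG2-2}. If $a=a(\Delta)$ is odd, $\Delta=\{z^{2la}H\}\cup\{hz^{(2l+1)a}H\}$, and conjugation by $Z=hz^{a}$ acts on $H\cong\ZZ^2$ through $z^{a}$, i.e.\ by $(-1)^{a}=-1$ on both $x$- and $y$-exponents via \eqref{multlawG2}; since $H$ is $\ell$-type irrelevant here and the conjugation is inversion on $\ZZ^2$ while $Z^{2}\in H$ wait — more precisely $Z$ has infinite order and $Z^{2}=h\cdot z^{a}h z^{a}$ lies in $\Gamma$, and the extension $1\to H\to\Delta\to\ZZ\to1$ with the generator acting by $-1$ on $H$ is exactly the defining relations of $\pi_1(\mathcal{G}_2)$ in \eqref{fund_G2} after rescaling generators, so $\Delta\cong\pi_1(\mathcal{G}_2)$. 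If $a$ is even, $\Delta=\{h^{l}z^{la}H\}$ and conjugation by $Z=hz^{a}$ acts on $H$ through $z^{a}$ with $(-1)^{a}=1$, i.e.\ trivially, so $\Delta$ is a central extension of $\ZZ$ by $\ZZ^2$ with trivial action, hence abelian of rank $3$ and torsion-free, hence $\Delta\cong\ZZ^3=\pi_1(\mathcal{G}_1)$. The main obstacle I anticipate is bookkeeping the even case carefully: one must check that $Z$ together with generators of $H$ actually generate a \emph{free} abelian group of rank $3$ (no hidden torsion or relation), which follows because $\phi_{\mathcal{G}2}(Z^{k})=ka\neq0$ for $k\neq0$ forces $Z$ to have infinite order modulo $H$, and $H\cong\ZZ^2$ embeds as a direct summand since $\Delta/H\cong\ZZ$ is free.
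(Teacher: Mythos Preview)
Your bijection argument is correct and in fact more explicit than the paper's: the paper simply asserts bijectivity by citing \Cref{defG2-invariants} and \Cref{lemG2-2}, without spelling out why the 3-plet determines $\Delta$; your computation $\Delta=\langle H,Z\rangle$ fills that in cleanly.

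For the isomorphism type you take a genuinely different route. The paper argues case-by-case at the level of presentations: for even $a(\Delta)$ it observes $\Delta\subset\langle x,y,z^2\rangle\cong\ZZ^3$; for odd $a(\Delta)$ it chooses explicit generators $X,Y,Z$, verifies the defining relations of $\pi_1(\mathcal{G}_2)$, and then rules out any \emph{improper} relation by a contradiction using uniqueness of canonical forms (\Cref{propG2-1}(iv)). You instead read off the extension $1\to H\to\Delta\to\ZZ\to1$ and the conjugation action of $Z$ on $H$ via \eqref{multlawG2}. This is conceptually cleaner, but two points need tightening. First, your claim that $Z^2=hz^ahz^a$ lies in $\Gamma$ is false: for odd $a$ one gets $Z^2=z^{2a}$, which has nonzero $z$-exponent and so lies outside $\Gamma$. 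This slip is harmless, since what your argument actually needs is only that $Z$ has infinite order modulo $H$, which follows from $\phi_{\mathcal{G}2}(Z^k)=ka$. Second, knowing the short exact sequence and the action by $-1$ does not by itself give $\Delta\cong\pi_1(\mathcal{G}_2)$: you must add that every extension with free quotient $\ZZ$ splits, so $\Delta\cong\ZZ^2\rtimes_{-1}\ZZ\cong\pi_1(\mathcal{G}_2)$. Without that remark your phrase ``exactly the defining relations \dots\ after rescaling generators'' leaves open the possibility of extra relations---precisely the issue the paper's canonical-form contradiction is designed to exclude. Once these two small gaps are patched, your extension argument is a valid and arguably tidier alternative.
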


\begin{proof}
Consider the set of subgroups $\Delta$ of index $n$ in
$\pi_1(\mathcal{G}_2)$. \Cref{defG2-invariants} provides the map of
the family of subgroups $\Delta$ to $n$-essential 3-plets.
\Cref{lemG2-2} shows that this map is a bijection. Now we describe
the isomorphism type of a subgroup.

If $a(\Delta)$ is even \Cref{lemG2-2} implies that $\Delta$ is a
subgroup of $\langle x,y,z^2\rangle$. Substituting the canonical
representations with even exponent at $z$ into (\ref{multlawG2}) one
gets that $\langle x,y,z^2\rangle \cong \ZZ^3$, thus $\Delta$ is a
subgroup of finite index in $\ZZ^3$. As a result, $\Delta$ is
isomorphic to $\ZZ^3$.

Consider the case $a(\Delta)$ is odd. For the sake of brevity, we
write $X=X(\Delta)$, $Y=Y(\Delta)$ and $Z=Z(\Delta)$. By
\Cref{defG2-invariants} subgroup $\Delta$ is generated by $X,Y,Z$.
Direct verification shows that the relations $XYX^{-1}Y^{-1}=1$,
$ZXZ^{-1}= X^{-1}$, and $ZYZ^{-1}=Y^{-1}$ hold. Further we call this
relations {\em the proper relations of the subgroup $\Delta$}. Thus
the map $x \to X, \,\, y \to Y,\,\, z \to Z$ can be extended to an
epimorphism $\pi_1(\mathcal{G}_2) \to \Delta $. To prove the
epimorphism is really an isomorphism we need to show that each
relation in $\Delta$ is a corollary of proper relations. We call a
relation, that is not a corollary of proper relations an {\em
improper relation}.

Assume the contrary, i.e. there are some improper relations in
$\Delta$. Since in $\Delta$ the proper relations holds, each element
can be represented in the canonical form, given by \Cref{propG2-1}
in terms of $X,Y,Z$, by using just the proper relations. That is
each element $g$ can be represented as
$$
g=X^{r}Y^{s}Z^{t}.
$$
If there is an improper relation then there is an equality
\begin{equation}\label{absurdum1-G2}
X^{r}Y^{s}Z^{t}=X^{r'}Y^{s'}Z^{t'},
\end{equation}
where at least one of the inequalities $r\neq r'$, $s\neq s'$,
$t\neq t'$ holds. Applying $\phi_{\mathcal{G}2}$ to both parts we
get $ta(\Delta)=t'a(\Delta)$, thus $t=t'$. Then
$X^{r}Y^{s}=X^{r'}Y^{s'}$, that means
\begin{equation}\label{absurdum2-G2}
\left\{
\begin{aligned}
c(\Delta)r+e(\Delta)s=c(\Delta)r'+e(\Delta)s'\\
b(\Delta)s=b(\Delta)s'\\
\end{aligned} \right.
\end{equation}

Keep in mind that $b(\Delta)c(\Delta)\neq 0$ since
$a(\Delta)b(\Delta)c(\Delta)=n$. The contradiction of equations
(\ref{absurdum2-G2}) with $(r,s)\neq(r',s')$ proves that $\Delta
\cong \pi_1(\mathcal{G}_2) $.
\end{proof}

\subsection{The proof of \Cref{th-1-dicosm}}

Proceed to the proof of \Cref{th-1-dicosm}. \Cref{propG2-2} claims
that each subgroup $\Delta$ of finite index $n$ is isomorphic to
 $\pi_1(\mathcal{G}_{2})$ or $\ZZ^3$, depending upon whether
$a(\Delta)$ is odd or even. Consider these two cases separately.

{\bf Case (i).} To find the number of subgroups isomorphic to
$\pi_1(\mathcal{G}_{2})$, by Proposition \ref{propG2-2} we need to
calculate the cardinality of the set of $n$-essential 3-plets with
odd $a$.

For each odd $a\mid n$ there are $\sigma_1(\frac{n}{a})$ subgroups
$H$ in $\Gamma$, such that $\big|\Gamma:H\big|=\frac{n}{a}$. Also
there are $\frac{n}{a}$ different choices of a coset $\nu$. Thus,
for each odd $a$ the number of $n$-essential 3-plets is
$\frac{n}{a}\sigma_1(\frac{n}{a})$. So, the total number of
subgroups is given by
$$
s_{\pi_1(\mathcal{G}_{2}), \pi_1(\mathcal{G}_{2})}(n) =  \sum_{a
\mid n,\, 2\nmid a}  \frac{n}{a}\sigma_1(\frac{n}{a}).
$$
Equivalently,
$$
s_{\pi_1(\mathcal{G}_{2}), \pi_1(\mathcal{G}_{2})}(n) =  \sum_{a
\mid n} \frac{n}{a}\sigma_1(\frac{n}{a}) - \sum_{2a \mid n}
\frac{n}{2a}\sigma_1(\frac{n}{2a})=\omega(n)-\omega(\frac{n}{2}).
$$

{\bf Case (ii).} Similarly to the previous case, we get the formula
$$
s_{\ZZ^3, \pi_1(\mathcal{G}_{2})}(n) =  \sum_{2a \mid n}
\frac{n}{2a}\sigma_1(\frac{n}{2a})=\omega(\frac{n}{2}).
$$

\subsection{The proof of \Cref{th-2-dicosm}}

The isomorphism types of subgroups are already provided by
\Cref{propG2-2}. Thus we just have to calculate the number of
conjugacy classes for each type.

\smallskip
{\bf Notation.} By $\xi$ denote the canonical homomorphism $\Gamma
\to \Gamma/H(\Delta)$.
\smallskip

To prove case (ii) consider a subgroup $\Delta$ of index $n$ in
$\pi_1(\mathcal{G}_{2})$ isomorphic to $\ZZ^3$. By \Cref{propG2-2},
a subgroup $\Delta$ is uniquely defined by the $n$-essential 3-plet
$(a(\Delta),H(\Delta),\nu(\Delta))$, where $a$ is even. Consider the
conjugacy class of subgroups $\Delta^g,\,g \in
\pi_{1}(\mathcal{G}_{2})$. It consists of subgroups corresponding to
3-plets $\big(a(\Delta^g),H(\Delta^g),\nu(\Delta^g)\big),\,g \in
\pi_1(\mathcal{G}_{2})$.

Obviously, $a(\Delta^g)=a(\Delta)$ and $H(\Delta^g)=H(\Delta)$.
Furthermore,
$\nu(\Delta)=\nu(\Delta^x)=\nu(\Delta^y)=\nu(\Delta^{z^2})$ and
$\nu(\Delta)=-\nu(\Delta^z)$, thus $\nu(\Delta^g)=\pm\nu(\Delta)$.
So, the conjugacy class of the subgroup $\Delta$ consists of one or
two subgroups, depending on whether the condition $2\nu(\Delta)=0$
holds or not. In the former case, $\Delta$ is normal in
$\pi_{1}(\mathcal{G}_{2})$.

By $\mathfrak{M}_1$ and $\mathfrak{M}_2$ denote the set of normal
subgroups, and the set of subgroups having exactly two subgroups in
their conjugacy class, respectively. The obvious formula for the
number of conjugacy classes is
$c_{\ZZ^3,\pi_1(\mathcal{G}_2)}=|\mathfrak{M}_1|+\frac{|\mathfrak{M}_2|}{2}$,
we rewrite it in the form
$c_{\ZZ^3,\pi_1(\mathcal{G}_3)}=\frac{|M_1|}{2}+\frac{|\mathfrak{M}_1|+|\mathfrak{M}_2|}{2}$.
Note that $ \mathfrak{M}_1 \bigcup \mathfrak{M}_2$ is the set of all
subgroups, thus $|\mathfrak{M}_1|+|\mathfrak{M}_2|$ is given by
\Cref{th-1-dicosm}. The cardinality of the subset of
$\mathfrak{M}_1$ with the fixed value of $a(\Delta)$ is exactly
$S(\frac{n}{a(\Delta)})$, which was defined in \Cref{number of
halfes}, so
$S(\frac{n}{a(\Delta)})=\sigma_1((\frac{n}{a(\Delta)}))+3\sigma_1((\frac{n}{2a(\Delta)}))$.

Summing over all possible values of $a(\Delta)$ one gets
$$
c_{\ZZ^3,\pi_{1}(\mathcal{G}_{2})}(n) =\frac{1}{2}\Big(\sum_{a\mid
n,\,2\mid
a}\Big(\sigma_1(\frac{n}{a})+3\sigma_1(\frac{n}{2a})\Big)+\omega(\frac{n}{2})\Big)=
\frac{1}{2}\Big(\sigma_2(\frac{n}{2})+3\sigma_2(\frac{n}{4})+\omega(\frac{n}{2})\Big).
$$
This concludes the proof of case (ii) of \Cref{th-2-dicosm}.

The proof of case (i) resembles the proof of (ii). Consider a
subgroup $\Delta$ of index $n$ in $\pi_1(\mathcal{G}_{2})$
isomorphic to $\pi_1(\mathcal{G}_{2})$. By \Cref{propG2-2}, the
subgroup $\Delta$ is uniquely defined by the $n$-essential 3-plet
$(a(\Delta),H(\Delta),\nu(\Delta))$ with odd $a(\Delta)$. Consider
the conjugacy class of subgroups $\Delta^g, \,g \in
\pi_{1}(\mathcal{G}_{2})$. It consists of subgroups, bijectively
corresponding to 3-plets $(a(\Delta^g),H(\Delta^g),\nu(\Delta^g))$.

Obviously, $a(\Delta^g)=a(\Delta)$ and $H(\Delta^g)=H(\Delta)$.
Furthermore, $\nu(\Delta^x)=\nu(\Delta)+2\xi(x)$,
$\nu(\Delta^y)=\nu(\Delta)+2\xi(y)$ and
$\nu(\Delta)=-\nu(\Delta^z)$. So,
$\nu(\Delta^g)=\nu(\Delta)+2r\xi(x)+2s\xi(y)$ for integer $r$ and
$s$.

Thus a conjugacy class is uniquely defined by $a(\Delta)$,
$H(\Delta)$ and the coset of $\nu(\Delta)$ in $\Gamma/\langle
H,2x,2y \rangle$. By \Cref{number of halfes-remark}, for a fixed
$a(\Delta)$ the sum over all possible $H$ of the numbers
$|\Gamma:\langle H,2x,2y) \rangle|$ is
$S(\frac{n}{a(\Delta)})=\sigma_1(\frac{n}{a(\Delta)})+3\sigma_1(\frac{n}{2a(\Delta)})$.
Summing this over all possible values of $a$ which are odd divisors
of $n$, we get the final formula
$$
c_{\pi_{1}(\mathcal{G}_{2}),\pi_{1}(\mathcal{G}_{2})}(n) = \sum_{a
\mid n\; 2\nmid
a}\big(\sigma_1(\frac{n}{a})+3\sigma_1(\frac{n}{2a})\big) =
\sigma_2(n) + 2\sigma_2(\frac{n}{2}) - 3\sigma_2(\frac{n}{4}).
$$

\section{On the coverings of $\mathcal{G}_{4}$}\label{partG4}
\subsection{The structure of the group $\pi_{1}(\mathcal{G}_{4})$ }
Recall that $\pi_{1}(\mathcal{G}_{4})$ is given by generators and
relations by $\pi_{1}(\mathcal{G}_{4})=\langle \tilde{x}, \tilde{y},
\tilde{z}: \tilde{x}\tilde{y}\tilde{x}^{-1}\tilde{y}^{-1}=1,
\tilde{z}\tilde{x}\tilde{z}^{-1}= \tilde{y},
\tilde{z}\tilde{y}\tilde{z}^{-1}=\tilde{x}^{-1}
 \rangle$ The following proposition provides the canonical
form of an element in $\pi_{1}(\mathcal{G}_{4})$.

\begin{proposition}\label{propG4-1}
\begin{itemize}
\item[(i)] Each element of $\pi_{1}(\mathcal{G}_{4})$ can be represented in the canonical form $\tilde{x}^a\tilde{y}^b\tilde{z}^c$
for some integer $a,b,c$.
\item[(ii)] The product of two canonical forms is given by the
formula
\begin{equation}\label{multlawG4}
\tilde{x}^a\tilde{y}^b\tilde{z}^c \cdot
\tilde{x}^{d}\tilde{y}^{e}\tilde{z}^{f}= \left\{
\begin{aligned}
\tilde{x}^{a+d}\tilde{y}^{b+e}\tilde{z}^{c+f} \quad \text{if} \quad c\equiv 0\mod4 \\
\tilde{x}^{a-e}\tilde{y}^{b+d}\tilde{z}^{c+f} \quad \text{if} \quad c\equiv 1\mod4 \\
\tilde{x}^{a-d}\tilde{y}^{b-e}\tilde{z}^{c+f} \quad \text{if} \quad c\equiv 2\mod4 \\
\tilde{x}^{a+e}\tilde{y}^{b-d}\tilde{z}^{c+f} \quad \text{if} \quad c\equiv 3\mod4 \\
\end{aligned} \right.
\end{equation}
\item[(iii)] The canonical epimorphism $\phi_{\mathcal{G}4}: \pi_{1}(\mathcal{G}_{4}) \to
\pi_{1}(\mathcal{G}_{4})/\langle \tilde{x},\tilde{y}\rangle \cong
\ZZ$, given by the formula $\tilde{x}^a\tilde{y}^b\tilde{z}^c \to c$
is well-defined.
\item[(iv)] The representation in the canonical form $g=\tilde{x}^a\tilde{y}^b\tilde{z}^c$ for
each element $g\in \pi_{1}(\mathcal{G}_{2})$ is unique.
\end{itemize}
\end{proposition}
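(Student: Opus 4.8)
The plan is to follow exactly the route used for $\pi_1(\mathcal{G}_2)$ in \Cref{propG2-1}, the only new feature being that conjugation by $\tilde z$ now has order $4$ rather than $2$ on the abelian part. First I would record the basic structural fact: the relations $\tilde z\tilde x\tilde z^{-1}=\tilde y$ and $\tilde z\tilde y\tilde z^{-1}=\tilde x^{-1}$ say precisely that, under the identification $\tilde x\leftrightarrow(1,0)$, $\tilde y\leftrightarrow(0,1)$, conjugation by $\tilde z$ acts on the abelian subgroup $\langle\tilde x,\tilde y\rangle$ as the automorphism $\ell$ of \Cref{ell}. Since $\ell^4=\mathrm{id}$, for every integer $c$ one gets $\tilde z^{c}\,\tilde x^{d}\tilde y^{e}\,\tilde z^{-c}=\tilde x^{d'}\tilde y^{e'}$ with $(d',e')=\ell^{c}(d,e)$ depending only on $c\bmod 4$ and equal to $(d,e)$, $(-e,d)$, $(-d,-e)$, $(e,-d)$ according as $c\equiv 0,1,2,3\pmod{4}$. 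Everything else follows from this conjugation rule.

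For (i) I would take an arbitrary word in $\tilde x^{\pm1},\tilde y^{\pm1},\tilde z^{\pm1}$ and repeatedly use the conjugation rule to carry every letter $\tilde z^{\pm1}$ to the right past all occurrences of $\tilde x^{\pm1},\tilde y^{\pm1}$, then collect the commuting powers of $\tilde x,\tilde y$ on the left, arriving at a word $\tilde x^{a}\tilde y^{b}\tilde z^{c}$. For (ii) I would compute $\tilde x^{a}\tilde y^{b}\tilde z^{c}\cdot\tilde x^{d}\tilde y^{e}\tilde z^{f}$ by pushing $\tilde z^{c}$ past $\tilde x^{d}\tilde y^{e}$ using the same rule; the four possible values of $\ell^{c}(d,e)$ listed above yield exactly the four cases of (\ref{multlawG4}). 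For (iii), the assignment $\tilde x\mapsto 0$, $\tilde y\mapsto 0$, $\tilde z\mapsto 1$ is compatible with all three defining relations, so it extends to a homomorphism $\pi_1(\mathcal{G}_4)\to\ZZ$; on canonical forms this is $\tilde x^{a}\tilde y^{b}\tilde z^{c}\mapsto c$, and since $\langle\tilde x,\tilde y\rangle$ is normal (it is carried into itself by conjugation by $\tilde z^{\pm1}$) and lies in the kernel, this is the projection onto $\pi_1(\mathcal{G}_4)/\langle\tilde x,\tilde y\rangle\cong\ZZ$.

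The only point requiring genuine care is (iv): parts (i)--(iii) only show that $\pi_1(\mathcal{G}_4)$ is a quotient of the set $\ZZ^{3}$ equipped with the multiplication (\ref{multlawG4}), and one must exclude further collapsing. To close this gap I would verify that (\ref{multlawG4}) actually defines a group $\widehat G$ on $\ZZ^{3}$ — most transparently by recognising $\widehat G$ as the semidirect product $\ZZ^{2}\rtimes_{\ell}\ZZ$ in which the generator of $\ZZ$ acts through $\ell$. Then in $\widehat G$ the triples $(1,0,0)$, $(0,1,0)$, $(0,0,1)$ satisfy the three defining relations of $\pi_1(\mathcal{G}_4)$, so by the universal property of the presentation there is a homomorphism $\Phi\colon\pi_1(\mathcal{G}_4)\to\widehat G$, and a short computation of $\Phi(\tilde x)^{a}\Phi(\tilde y)^{b}\Phi(\tilde z)^{c}$ inside $\widehat G$ gives $\Phi(\tilde x^{a}\tilde y^{b}\tilde z^{c})=(a,b,c)$ (in particular $\Phi$ is onto, so $\pi_1(\mathcal{G}_4)\cong\ZZ^{2}\rtimes_{\ell}\ZZ$). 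Consequently, if $\tilde x^{a}\tilde y^{b}\tilde z^{c}=\tilde x^{a'}\tilde y^{b'}\tilde z^{c'}$ in $\pi_1(\mathcal{G}_4)$, applying $\Phi$ forces $(a,b,c)=(a',b',c')$, which is (iv).

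The main obstacle is thus purely computational: checking that (\ref{multlawG4}) is associative with identity $(0,0,0)$ and inverses — equivalently, that the identification of $\widehat G$ with $\ZZ^{2}\rtimes_{\ell}\ZZ$ is correct — and that the three relations hold in $\widehat G$. Both amount to a routine case analysis on the residue modulo $4$ of the relevant $\tilde z$-exponent, entirely parallel to the $\mathcal{G}_2$ computation, so I will not carry them out in detail.
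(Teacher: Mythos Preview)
Your proposal is correct and is exactly the natural elaboration of what the paper leaves implicit: the paper's own proof of this proposition consists of the single line ``The proof is similar to the proof of \Cref{propG2-1}'', and \Cref{propG2-1} in turn is dismissed with ``Routinely follows from the definition of the group''. Your identification of the conjugation action with the automorphism $\ell$ of \Cref{ell}, and your use of the semidirect product $\ZZ^{2}\rtimes_{\ell}\ZZ$ to establish uniqueness in (iv), make explicit the structure that the paper takes for granted; in particular the semidirect-product argument for (iv) is more careful than anything the paper writes down, but it is the standard way to close the gap and is surely what the authors have in mind.
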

\begin{proof}
The proof is similar to the proof of \Cref{propG2-1}.
\end{proof}

\smallskip
{\bf Notation.} By $\Gamma$ denote the subgroup of
$\pi_{1}(\mathcal{G}_{4})$ generated by $\tilde{x},\tilde{y}$.
\smallskip

In the next definition we introduce the invariants, similar to those
used in \Cref{number of sublattices}.

\begin{definition}\label{defG4-invariants}
Suppose all elements of $\pi_{1}(\mathcal{G}_{4})$ are represented
in the canonical form. Let $\Delta$ be a subgroup of finite index
$n$ in $\pi_{1}(\mathcal{G}_{4})$. Put $H(\Delta)=\Delta\bigcap
\Gamma$. By $a(\Delta)$ denote the minimal positive exponent at $z$
among all the elements of $\Delta$. Choose an element $Z(\Delta)$
with such exponent at $z$, represented in the form
$Z(\Delta)=hz^{a(\Delta)}$, where $h \in \Gamma$. By
$\nu(\Delta)=hH(\Delta)$ denote the coset in coset decomposition
$\Gamma/H(\Delta)$.
\end{definition}

Note that the invariants $a(\Delta)$, $H(\Delta)$ and $\nu(\Delta)$
are well-defined. In particular, the latter one does not depends on
a choice of $Z(\Delta)$. Also
$a(\Delta)[\Gamma:H(\Delta)]=[\pi_{1}(\mathcal{G}_{4}):\Delta]$.


%


\begin{lemma}\label{lemG4-1.1}
If $a(\Delta)$ is odd then $H(\Delta)\lhd \pi_1(\mathcal{G}_4)$.
\end{lemma}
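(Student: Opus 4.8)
The plan is to show that $H(\Delta)$ is normal in $\pi_1(\mathcal{G}_4)$ by checking invariance under conjugation by the generators $\tilde{x},\tilde{y},\tilde{z}$. Since $H(\Delta)=\Delta\cap\Gamma$ and $\Gamma=\langle\tilde{x},\tilde{y}\rangle$ is abelian, conjugation of any $h\in H(\Delta)$ by $\tilde{x}$ or $\tilde{y}$ fixes $h$ inside $\Gamma$, and the result lies in $\Delta$ because $\Delta$ is a subgroup; so $\tilde{x}H(\Delta)\tilde{x}^{-1}=H(\Delta)$ and likewise for $\tilde{y}$. The only real content is invariance under conjugation by $\tilde{z}$: I must show $\tilde{z}H(\Delta)\tilde{z}^{-1}\subseteq H(\Delta)$ (equality then follows by a finite-index or cardinality argument). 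Conjugation by $\tilde{z}$ restricted to $\Gamma\cong\ZZ^2$ is precisely the automorphism $\ell:(x,y)\mapsto(-y,x)$ of \Cref{ell}, as read off from the relations $\tilde{z}\tilde{x}\tilde{z}^{-1}=\tilde{y}$, $\tilde{z}\tilde{y}\tilde{z}^{-1}=\tilde{x}^{-1}$ (and from the $c\equiv1\bmod4$ line of \eqref{multlawG4}). So what needs proving is: $\ell\big(H(\Delta)\big)=H(\Delta)$.

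First I would fix an element $Z=Z(\Delta)=h\,\tilde{z}^{a}$ with $a=a(\Delta)$ odd, $h\in\Gamma$; this element lies in $\Delta$. For $h'\in H(\Delta)\subseteq\Delta$, the conjugate $Z h' Z^{-1}$ lies in $\Delta$, and since $h'\in\Gamma$ and $\Gamma\lhd\pi_1(\mathcal{G}_4)$ (it is the kernel of $\phi_{\mathcal{G}4}$), in fact $Zh'Z^{-1}\in\Delta\cap\Gamma=H(\Delta)$. Now compute $Zh'Z^{-1}=h\,\tilde{z}^{a}\,h'\,\tilde{z}^{-a}\,h^{-1}$; conjugation by $\tilde{z}^{a}$ on $\Gamma$ is $\ell^{a}$, and conjugation by $h\in\Gamma$ is trivial on $\Gamma$ since $\Gamma$ is abelian. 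Hence $Zh'Z^{-1}=\ell^{a}(h')$, and therefore $\ell^{a}\big(H(\Delta)\big)\subseteq H(\Delta)$, so $\ell^{a}\big(H(\Delta)\big)=H(\Delta)$ by finiteness of index. Since $\ell$ has order $4$ and $a$ is odd, $\gcd(a,4)=1$, so $\ell^{a}$ generates the same cyclic group $\langle\ell\rangle$ as $\ell$; thus $\ell=\ell^{k}\circ(\ell^{a})$ for a suitable $k$ gives $\ell\big(H(\Delta)\big)=H(\Delta)$.

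The main obstacle is really just making the step $\ell^{a}(H(\Delta))=H(\Delta)\Rightarrow\ell(H(\Delta))=H(\Delta)$ airtight; this is where oddness of $a$ is used, via $\gcd(a,4)=1$, so that $\ell\in\langle\ell^{a}\rangle$. With $\ell(H(\Delta))=H(\Delta)$ in hand, normality of $H(\Delta)$ follows: it is preserved by conjugation by each of $\tilde{x},\tilde{y}$ (trivially, as $\Gamma$ is abelian) and by $\tilde{z}$ (this is exactly $\ell(H(\Delta))=H(\Delta)$), hence by all of $\pi_1(\mathcal{G}_4)=\langle\tilde{x},\tilde{y},\tilde{z}\rangle$. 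I would also remark that the induced map on the finite abelian quotient $\Gamma/H(\Delta)$ is well-defined (this uses $\ell(H(\Delta))=H(\Delta)$ again), which is the form in which this lemma will be used downstream for the conjugacy-class count.
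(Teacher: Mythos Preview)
Your proof is correct and follows essentially the same strategy as the paper: both verify that $H(\Delta)$ is preserved by conjugation by $\tilde{x},\tilde{y}$ (trivially, $\Gamma$ abelian) and by $Z\in\Delta$, and then use oddness of $a$ to upgrade this to invariance under $\tilde{z}$. The only cosmetic difference is in this last step: the paper separately notes $H(\Delta)^{\tilde{z}^2}=H(\Delta)$ (since $\tilde{z}^2$ acts as $-\mathrm{id}$ on $\Gamma$, preserving every sublattice) and then observes $\langle \tilde{x},\tilde{y},\tilde{z}^2,Z\rangle=\pi_1(\mathcal{G}_4)$ when $a$ is odd, whereas you phrase it as $\ell$ having order $4$ and $\gcd(a,4)=1$ so $\ell\in\langle\ell^{a}\rangle$; these are the same fact.
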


\begin{proof}
Recall that $Z=h\tilde{z}^{a(\Delta)}\in \Delta$, where $a(\Delta)$
is odd and $h \in \langle \tilde{x}, \tilde{y}\rangle$. First
$H(\Delta)^Z=H(\Delta)$. Also
$H(\Delta)^{\tilde{x}}=H(\Delta)^{\tilde{y}}=H(\Delta)^{\tilde{z}^2}=H(\Delta)$.
The former fact means that $H(\Delta)^g=H(\Delta),\; g\in
\pi_1(\mathcal{G}_4)$, hence $\langle
\tilde{x},\tilde{y},\tilde{z}^2,Z \rangle=\pi_1(\mathcal{G}_4)$ in
the case of odd $a(\Delta)$.
\end{proof}

%
%
%

\begin{definition}\label{defG4-3-plet}
A 3-plet $(a,H,\nu)$ is called {\em $n$-essential} if the following
conditions holds:
\begin{itemize}
\item[(i)] $a$ is a positive divisor of $n$,
\item[(ii)] $H$ is a subgroup of index $n/a$ in $\Gamma$ also if $a$ is odd then $H\lhd
\pi_1(\mathcal{G}_4)$,
\item[(iii)] $\nu$ is an element of $\Gamma/H$.
\end{itemize}
\end{definition}

\begin{lemma}\label{lemG4-2}
For an arbitrary $n$-essential 3-plet $(a,H,\nu)$ there exists a
subgroup $\Delta$ of $\pi_1(\mathcal{G}_4)$ such that
$(a,H,\nu)=(a(\Delta),H(\Delta),\nu(\Delta))$.
\end{lemma}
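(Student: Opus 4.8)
The plan is to construct $\Delta$ explicitly from the data $(a,H,\nu)$, mimicking the construction in \Cref{lemG2-2} but paying attention to the order-4 twisting in the multiplication law \eqref{multlawG4}. Let $h\in\Gamma$ be a representative of the coset $\nu$, and set $Z=h\tilde z^{a}$. The subgroup we want will be generated by $H$ together with $Z$; the task is to show that $\langle H, Z\rangle$ has index exactly $n$ in $\pi_1(\mathcal{G}_4)$ and that $\langle H,Z\rangle\cap\Gamma=H$, $a(\Delta)=a$, $\nu(\Delta)=\nu$.

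First I would split into cases according to $a \bmod 4$, since the power $Z^k=h\,\tilde z^{a}\,h\,\tilde z^{a}\cdots$ collapses to $(\text{some element of }\Gamma)\cdot \tilde z^{ka}$ with the $\Gamma$-part governed by the rotation-by-$a$ automorphism $\tilde z(\cdot)\tilde z^{-1}$ of $\Gamma\cong\ZZ^2$, whose order is $4/\gcd(a,4)$. Concretely: if $a$ is odd, $\langle H,\tilde z^2, Z\rangle = \pi_1(\mathcal{G}_4)$ because $H\lhd\pi_1(\mathcal{G}_4)$ by hypothesis (this is exactly condition (ii) of \Cref{defG4-3-plet}, and it is what \Cref{lemG4-1.1} forces), so one takes $\Delta=\langle H,Z\rangle$ and checks the coset decomposition $\pi_1(\mathcal{G}_4)=\bigsqcup_{j} (\text{coset reps of }\Gamma/H)\,\tilde z^{j}$ has each $\tilde z^j$-layer hit once per $H$-coset, giving index $a\cdot|\Gamma:H|=n$. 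If $a\equiv 2\bmod 4$, then $Z^2 = h\cdot(\tilde z^a h\tilde z^{-a})\cdot\tilde z^{2a}$ and $\tilde z^a h\tilde z^{-a}=h^{-1}$ (rotation by $\pi$), so $Z^2=\tilde z^{2a}$; here one builds $\Delta = \{\,h^{l}\tilde z^{la}\eta : l\in\ZZ,\ \eta\in H\,\}$ exactly as in the even case of \Cref{lemG2-2}, and the condition $\phi(H)\le H$ for $\phi$ = rotation-by-$\pi$ is automatic since $\phi=-\mathrm{id}$ on $\ZZ^2$. If $a\equiv 0\bmod 4$, the twist is trivial on the relevant powers and $\Delta=\langle H,h\tilde z^a\rangle\le\langle\tilde x,\tilde y,\tilde z^4\rangle\cong\ZZ^3$, again a direct computation.

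The key verification in every case is that the proposed set is closed under multiplication and inversion — this reduces to a finite check using \eqref{multlawG4}, where one must confirm that conjugating an element of $H$ by $\tilde z^{a}$ (or by $Z$) lands back in $H$. For $a$ even this needs nothing beyond $-\mathrm{id}(H)=H$; for $a$ odd it is precisely the normality hypothesis $H\lhd\pi_1(\mathcal{G}_4)$, i.e.\ $\tilde z H\tilde z^{-1}=H$, which is why condition (ii) of the $n$-essential 3-plet carries that extra clause. Once closure is established, computing the index is a routine coset count ($|\pi_1(\mathcal{G}_4):\Delta| = a\cdot|\Gamma:H| = a\cdot(n/a)=n$), and reading off $(a(\Delta),H(\Delta),\nu(\Delta))=(a,H,\nu)$ is immediate from the construction: $\Delta\cap\Gamma=H$ by design, no smaller positive $\tilde z$-exponent can occur because any element of $\Delta$ has $\tilde z$-exponent a multiple of $a$, and $Z=h\tilde z^a\in\Delta$ realizes the coset $\nu$.

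The main obstacle I anticipate is bookkeeping rather than conceptual: organizing the case split on $a\bmod 4$ cleanly and making sure the ``closure under multiplication'' computation is stated once in a way that covers all residues, so the proof does not balloon into four near-identical subcases. A secondary subtlety is the $a\equiv 2\bmod 4$ case, where $\tilde z^{a}$ acts as $-\mathrm{id}$ on $\Gamma$ but $\tilde z$ itself acts as $\ell$ (order 4); one must take care that the generating set uses $Z=h\tilde z^a$ and not $\tilde z$, so that the relevant automorphism is genuinely $-\mathrm{id}$ and the argument of \Cref{lemG2-2}'s even case transfers verbatim. Apart from that, everything is forced by \Cref{propG4-1} and the definition of $n$-essential.
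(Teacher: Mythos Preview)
Your approach --- take $\Delta=\langle H,\,h\tilde z^{a}\rangle$ and verify closure and index via a case split on $a\bmod 4$, using the normality clause in \Cref{defG4-3-plet}(ii) precisely when $a$ is odd --- is exactly the paper's; the paper merely writes out the resulting coset union $\bigcup_{l}Z^{l}H$ explicitly in each residue class and leaves the verification as ``direct''. One small slip: for $a\equiv 2\bmod 4$ you correctly compute $Z^{2}=\tilde z^{2a}$ but then write $\Delta=\{h^{l}\tilde z^{la}\eta\}$ ``as in the even case of \Cref{lemG2-2}''; since $\tilde z^{a}$ here acts as $-\mathrm{id}$ (not trivially), the correct analog is the \emph{odd} case of \Cref{lemG2-2}, and the union is $\{h\tilde z^{(2l+1)a}H\}\cup\{\tilde z^{2la}H\}$ as the paper records.
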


\begin{proof}
Consider some $n$-essential 3-plet $(a,H,\nu)$ and let $h$ be a
representative of the coset $\nu$.

If $a\equiv 0 \mod 4$ consider the set
$$
\Delta = \{h^l\tilde{z}^{la}H|l\in \ZZ\}.
$$
Direct verification shows that $\Delta$ is a subgroup of index $n$
in $\pi_1(\mathcal{G}_4)$ with required characteristics $a(\Delta)$,
$H(\Delta)$ and $\nu(\Delta)$.

In the three remained cases the structure of subgroup is defined the
following way: in case $a\equiv 2 \mod 4$ put
$$
\Delta = \{h\tilde{z}^{(2l+1)a}H|l\in \ZZ\}\bigcup
\{\tilde{z}^{2la}H|l\in \ZZ\}.
$$

In case $a\equiv 1 \mod 4$ put
$$
\aligned & \Delta = \{h\tilde{z}^{(4l+1)a}H|l\in \ZZ\}\bigcup
\{hh^{\tilde{z}}\tilde{z}^{(4l+2)a}H|l\in \ZZ\}\bigcup\\&
\{h^{\tilde{z}}\tilde{z}^{(4l+3)a}H|l\in \ZZ\}\bigcup
\{\tilde{z}^{4la(\Delta)}H|l\in \ZZ\},
\endaligned
$$
keep in mind that $gg^{\tilde{z}^2}=1$ for all $g \in \Gamma$.

In the same way, if $k\equiv 3 \mod 4$ put
$$
\aligned & \Delta = \{h\tilde{z}^{(4l+1)k}H|l\in \ZZ\}\bigcup
\{hh^{\tilde{z}^3}\tilde{z}^{4lk+2}H|l\in \ZZ\}\bigcup\\&
\{h^{\tilde{z}^3}\tilde{z}^{(4l+3)k}H|l\in \ZZ\}\bigcup
\{\tilde{z}^{4lk(\Delta)}H|l\in \ZZ\}.
\endaligned
$$
\end{proof}

\begin{proposition}\label{propG4-2}
There is a bijection between the set of $n$-essential 3-plets
$(a,H,\nu)$ and the set of subgroups of index $n$ in
$\pi_1(\mathcal{G}_4)$ given by the correspondence $\Delta
\leftrightarrow (a(\Delta),H(\Delta),\nu(\Delta))$. Moreover,
$\Delta\cong \ZZ^3$ if $a(\Delta) \equiv 0 \mod 4$, $\Delta\cong
\pi_1(\mathcal{G}_2)$ if $a(\Delta) \equiv 2 \mod 4$ and
$\Delta\cong \pi_1(\mathcal{G}_4)$ if $a(\Delta) \equiv 1 \mod 2$.
\end{proposition}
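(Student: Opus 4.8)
The plan is to imitate the proof of \Cref{propG2-2}. \Cref{defG4-invariants} already defines the map $\Delta\mapsto(a(\Delta),H(\Delta),\nu(\Delta))$ from subgroups of index $n$ to triples; the identity $a(\Delta)[\Gamma:H(\Delta)]=n$ together with \Cref{lemG4-1.1} shows the image lies among the $n$-essential triples, and \Cref{lemG4-2} shows the map is onto this set. For injectivity I would use that $\Gamma=\ker\phi_{\mathcal{G}4}$ is normal in $\pi_1(\mathcal{G}_4)$, whence $H(\Delta)\lhd\Delta$, and that $\phi_{\mathcal{G}4}(\Delta)=a(\Delta)\ZZ$ (as a nonzero subgroup of $\ZZ$ whose least positive element is, by definition, $a(\Delta)$). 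Fixing $Z=h\tilde z^{a(\Delta)}\in\Delta$ with $h$ a representative of $\nu(\Delta)$, the part of $\Delta$ mapping to $k\,a(\Delta)$ under $\phi_{\mathcal{G}4}$ equals $Z^kH(\Delta)$, so $\Delta=\bigcup_{k\in\ZZ}Z^kH(\Delta)$ is recovered from the triple. This establishes the bijection.

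For the isomorphism types the first step is to read off from (\ref{multlawG4}) that conjugation by $\tilde z$ restricts on $\Gamma\cong\ZZ^2$ to the automorphism $\ell$ of \Cref{ell}; hence conjugation by $\tilde z^{a}$ is $\ell^{a}$, which equals $\mathrm{id}$ if $a\equiv0$, $-\mathrm{id}$ if $a\equiv2$, and $\ell$ or $\ell^{-1}$ if $a$ is odd (all $\mod 4$). If $a(\Delta)\equiv0\mod4$, then by \Cref{lemG4-2} $\Delta\leqslant\langle\tilde x,\tilde y,\tilde z^{4}\rangle$, a group on which (\ref{multlawG4}) is coordinatewise addition, so it is $\cong\ZZ^3$ and hence so is its finite-index subgroup $\Delta$. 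If $a(\Delta)\equiv2\mod4$, take a Gauss basis $X,Y$ of $H(\Delta)$ as in \Cref{number of sublattices} and $Z=h\tilde z^{a(\Delta)}\in\Delta$; since conjugation by $\tilde z^{a(\Delta)}$ negates $\Gamma$ and $\Gamma$ is abelian, one gets $XYX^{-1}Y^{-1}=1$, $ZXZ^{-1}=X^{-1}$, $ZYZ^{-1}=Y^{-1}$, giving an epimorphism $\pi_1(\mathcal{G}_2)\to\Delta$. If $a(\Delta)$ is odd, then $H(\Delta)\lhd\pi_1(\mathcal{G}_4)$ forces $\ell(H(\Delta))=H(\Delta)$, so by \Cref{lemG4-1.3} there is a generator $\tilde X$ of $H(\Delta)$ with $\tilde Y:=\ell^{a(\Delta)}(\tilde X)$ again a generator and $H(\Delta)=\langle\tilde X,\tilde Y\rangle$; with $\tilde Z=h\tilde z^{a(\Delta)}$ one gets $\tilde Z\tilde X\tilde Z^{-1}=\ell^{a(\Delta)}(\tilde X)=\tilde Y$ and $\tilde Z\tilde Y\tilde Z^{-1}=\ell^{2a(\Delta)}(\tilde X)=\tilde X^{-1}$, i.e. the defining relations of $\pi_1(\mathcal{G}_4)$, so there is an epimorphism $\pi_1(\mathcal{G}_4)\to\Delta$.

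In the latter two cases it remains to show these epimorphisms are injective, which I would do exactly as in \Cref{propG2-2}: an extra relation would give $X^{r}Y^{s}Z^{t}=X^{r'}Y^{s'}Z^{t'}$ (resp. with tildes), applying $\phi_{\mathcal{G}4}$ forces $t=t'$, and then $X^{r}Y^{s}=X^{r'}Y^{s'}$ inside the rank-$2$ lattice $H(\Delta)$ forces $(r,s)=(r',s')$ because $X,Y$ (resp. $\tilde X,\tilde Y$) form a basis of it. The main obstacle is the odd case: one must choose $\tilde Y=\ell^{a(\Delta)}(\tilde X)$ with care so that the relations come out with the correct signs uniformly for $a(\Delta)\equiv1$ and $a(\Delta)\equiv3\mod4$, and verify via \Cref{lemG4-1.3} that $\tilde X,\tilde Y$ still generate $H(\Delta)$; everything else is routine bookkeeping with (\ref{multlawG4}).
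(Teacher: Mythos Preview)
Your proposal is correct and follows essentially the same route as the paper's proof: the bijection is obtained from \Cref{defG4-invariants}, \Cref{lemG4-1.1} and \Cref{lemG4-2}, and the isomorphism types are determined by the residue of $a(\Delta)\bmod 4$ via the action of $\tilde z^{a(\Delta)}$ on $\Gamma$, reducing to the argument of \Cref{propG2-2}. You are simply more explicit than the paper in two places---the injectivity of $\Delta\mapsto(a,H,\nu)$ (recovering $\Delta=\bigcup_k Z^kH(\Delta)$), and the construction of generators $\tilde X,\tilde Y=\ell^{a(\Delta)}(\tilde X)$ in the odd case via \Cref{lemG4-1.3}---both of which the paper leaves implicit.
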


\begin{proof}
Consider the set of subgroups $\Delta$ of index $n$ in
$\pi_1(\mathcal{G}_4)$. \Cref{defG4-invariants} together with
\Cref{lemG4-1.1} build the map of the family of subgroups
 $\Delta$ to $n$-essential 3-plets $\Delta
\leftrightarrow (a(\Delta),H(\Delta),\nu(\Delta))$. \Cref{lemG4-2}
shows that this map is a bijection.

The proof of isomorphism part of the statement is similar to
\Cref{propG2-2}.
In case $a(\Delta) \equiv 0 \mod 4$ equation \ref{multlawG4}
provides the commutativity, thus $\Delta\cong \ZZ^3$. The case
$a(\Delta) \equiv 2 \mod 4$ is proven in \Cref{propG2-2}. The case
$a(\Delta) \equiv 1 \mod 2$ follows the same way as the proof of
\Cref{propG2-2}: one fixes the suitable generators
$\widetilde{X},\widetilde{Y},\widetilde{Z}$, for which all required
relations hold, and prove that any unnecessary relation implies a
relation in $\Gamma$, which contradiction completes the proof.
\end{proof}

\subsection{The proof of \Cref{th-1-tetracosm}}
The isomorphism types of finite index subgroups in
$\pi_1(\mathcal{G}_{4})$ are provided by \Cref{propG4-2}. The cases
of subgroups $\pi_1(\mathcal{G}_2)$ and $\ZZ^3$ in the group
$\pi_1(\mathcal{G}_4)$ are similar to the cases of subgroups
$\pi_1(\mathcal{G}_2)$ and $\ZZ^3$ in the group
$\pi_1(\mathcal{G}_2)$ respectively.

In case of a subgroup $\Delta$ isomorphic to $\pi_1(\mathcal{G}_4)$,
for each fixed $k \equiv 1 \mod 2$ there are $\tau(\frac{n}{k})$
different $H$, and $\frac{n}{k}$ different $\nu$ for each fixed $H$,
thus the final value is
$$
s_{\pi_1(\mathcal{G}_{4}), \pi_1(\mathcal{G}_{4})}(n) =  \sum_{k
\mid n} \frac{n}{k}\tau(\frac{n}{k}) - \sum_{2k \mid n}
\frac{n}{2k}\tau(\frac{n}{2k})=\sum_{k \mid n}k\tau(k)-\sum_{k \mid
\frac{n}{2}}k\tau(k).
$$

\subsection{The proof of \Cref{th-2-tetracosm}}

The isomorphism types of subgroups are already provided by
\Cref{propG4-2}. Thus we just have to calculate the number of
conjugacy classes for each type. Consider the cases in order (iii),
(ii), (i).

\subsubsection{Case (iii)}
Let $\Delta$ be a subgroup of index $n$ in $\pi_1(\mathcal{G}_4)$,
isomorphic to $\ZZ^3$.

Since
$\Delta=\Delta^{\tilde{x}}=\Delta^{\tilde{y}}=\Delta^{\tilde{z}^4}$,
one of the following three possibilities holds:
\begin{itemize}
\item $\Delta$ is normal if $\Delta^{\tilde{z}}=\Delta$
\item conjugacy class of $\Delta$ contains exactly 2 subgroups if $\Delta^{\tilde{z}}\neq\Delta$
but $\Delta^{\tilde{z}^2}=\Delta$
\item conjugacy class of $\Delta$ contains exactly 4 subgroups if
$\Delta^{\tilde{z}^2}\neq\Delta$.
\end{itemize}
%
%
%
%

\begin{definition}
By $\mathfrak{M}_1$, $\mathfrak{M}_2$ and $\mathfrak{M}_4$ denote
the respective sets of subgroups $\Delta \cong \ZZ^3$ of index $n$:
which are normal in $\pi_1(\mathcal{G}_4)$, which belong to a
conjugacy class of exactly two subgroups, and which belong to a
conjugacy class of exactly four subgroups.
\end{definition}

The obvious formula for the number of conjugacy classes is
$c_{\ZZ^3,\pi_1(\mathcal{G}_4)}=|\mathfrak{M}_1|+\frac{|\mathfrak{M}_2|}{2}+\frac{|\mathfrak{M}_4|}{4}$,
we rewrite it in the form
$c_{\ZZ^3,\pi_1(\mathcal{G}_4)}=\frac{|\mathfrak{M}_1|}{2}+\frac{|\mathfrak{M}_1|+|\mathfrak{M}_2|}{4}+\frac{|\mathfrak{M}_1|+|\mathfrak{M}_2|+|\mathfrak{M}_4|}{4}$.

\Cref{th-1-tetracosm} claims
$|\mathfrak{M}_1|+|\mathfrak{M}_2|+|\mathfrak{M}_4|=\omega(\frac{n}{4})$.

\begin{lemma}\label{lemG4-4}
$$|\mathfrak{M}_1|+|\mathfrak{M}_2|=\sigma_2(\frac{n}{4})+3\sigma_2(\frac{n}{8}).$$
\end{lemma}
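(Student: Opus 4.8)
We need to count the subgroups $\Delta \cong \ZZ^3$ of index $n$ in $\pi_1(\mathcal{G}_4)$ that satisfy $\Delta^{\tilde z^2} = \Delta$; this set is exactly $\mathfrak{M}_1 \cup \mathfrak{M}_2$. By \Cref{propG4-2} such a $\Delta$ has $a(\Delta) = a \equiv 0 \bmod 4$, and it is determined by the $n$-essential 3-plet $(a, H, \nu)$ with $H \leqslant \Gamma = \ZZ^2$ of index $n/a$ and $\nu \in \Gamma/H$. So first I would translate the condition $\Delta^{\tilde z^2} = \Delta$ into a condition on the 3-plet. Conjugation by $\tilde z^2$ acts on $\Gamma$ by $g \mapsto g^{-1}$ (from \eqref{multlawG4} with $c \equiv 2 \bmod 4$), which is an automorphism fixing every subgroup $H$, so $H(\Delta^{\tilde z^2}) = H$ automatically and $a(\Delta^{\tilde z^2}) = a$. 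The only nontrivial part is how $\nu$ transforms: I would compute, as in the proof of \Cref{th-2-dicosm} case (i), that $\nu(\Delta^{\tilde z^2}) = \nu(\Delta) + (\text{something involving } 2\xi(\tilde x), 2\xi(\tilde y))$, more precisely that the conjugacy class of $\Delta$ under $\langle \tilde z^2 \rangle$ together with $\tilde x, \tilde y$ is recorded by the image of $\nu$ in $\Gamma / \langle H, 2\tilde x, 2\tilde y\rangle$. Hence $\Delta \in \mathfrak{M}_1 \cup \mathfrak{M}_2$ iff $2\nu(\Delta) \in \langle 2\xi(\tilde x), 2\xi(\tilde y) \rangle$ inside $\Gamma/H$, equivalently iff $2\nu(\Delta) = 0$ in $\Gamma / \langle H, 2\tilde x, 2\tilde y \rangle$ — which, since that quotient is an elementary abelian $2$-group, holds automatically; so one must instead phrase it as: the number of such $\Delta$ with fixed $(a,H)$ equals the number of $\nu \in \Gamma/H$ with $2\nu \in \langle 2\xi(\tilde x), 2\xi(\tilde y)\rangle$. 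Care is needed here to get the right count — this is the main obstacle.

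**Reduction to a lattice count.** Once the condition on $\nu$ is pinned down correctly, the count of $\mathfrak{M}_1 \cup \mathfrak{M}_2$ becomes a sum over $a \mid n$ with $4 \mid a$, and for each such $a$ a sum over subgroups $H \leqslant \ZZ^2$ of index $n/a$ of the number of admissible cosets $\nu$. I expect this inner quantity to be exactly $S(n/a)$, the function of \Cref{number of halfes}, possibly after invoking \Cref{number of halfes-remark} to rewrite "number of $\nu$ with $2\nu = 0$" as $|\ZZ^2 / \langle (2,0),(0,2), H\rangle|$ — indeed the right formulation of the $\tilde z^2$-invariance condition should turn out to be precisely $2\nu(\Delta) = 0$ in $\Gamma/H$, matching the $\mathcal{G}_2$ argument verbatim, because conjugation by $\tilde z^2$ in $\mathcal{G}_4$ is the inversion map, exactly as conjugation by $z$ in $\mathcal{G}_2$. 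So the count of $\mathfrak{M}_1 \cup \mathfrak{M}_2$ is $\sum_{a \mid n,\ 4 \mid a} S(n/a)$.

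**Finishing the computation.** Substituting $S(m) = \sigma_1(m) + 3\sigma_1(m/2)$ and writing $a = 4a'$, this becomes
$$
\sum_{a' \mid n/4} \Big( \sigma_1\big(\tfrac{n}{4a'}\big) + 3\sigma_1\big(\tfrac{n}{8a'}\big)\Big) = \sum_{a' \mid n/4} \sigma_1\big(\tfrac{n}{4a'}\big) + 3\sum_{a' \mid n/8} \sigma_1\big(\tfrac{n}{8a'}\big) = \sigma_2\big(\tfrac{n}{4}\big) + 3\sigma_2\big(\tfrac{n}{8}\big),
$$
using $\sigma_2(m) = \sum_{k \mid m}\sigma_1(k)$ and the convention that the functions vanish off $\Bbb N$ (so the $3\sigma_1(\cdot/8)$ term is present only when $8 \mid n$, automatically handled by $\sigma_2(n/8)$). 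This is the claimed identity $|\mathfrak{M}_1| + |\mathfrak{M}_2| = \sigma_2(\tfrac{n}{4}) + 3\sigma_2(\tfrac{n}{8})$.

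**Where the difficulty lies.** The routine parts are the substitution and the divisor-sum manipulation. The genuine content — and the step I would check most carefully — is the claim that $\Delta^{\tilde z^2} = \Delta$ is equivalent to $2\nu(\Delta) = 0$ in $\Gamma/H(\Delta)$ (rather than some coarser or finer condition involving the $\tilde z$-twist of $\nu$), and correspondingly the claim that when this holds the conjugacy subclass under $\langle \tilde x, \tilde y, \tilde z^2\rangle$ is a single subgroup. This requires carefully tracking how $\nu(\Delta)$ behaves under conjugation by $\tilde x$, $\tilde y$, and $\tilde z^2$ using the multiplication law \eqref{multlawG4} and the explicit subgroup descriptions from \Cref{lemG4-2}; the analogous bookkeeping in \Cref{propG2-2} and the proof of \Cref{th-2-dicosm}(i) is the model to follow.
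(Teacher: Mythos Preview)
Your proposal is correct and follows exactly the paper's approach: reduce $\Delta^{\tilde z^2}=\Delta$ to the condition $2\nu(\Delta)=0$ in $\Gamma/H$, apply \Cref{number of halfes} to get $S(n/a)$ for each fixed $a\equiv 0\bmod 4$, and sum over such $a$ to obtain $\sigma_2(n/4)+3\sigma_2(n/8)$. One clarification: your first paragraph's detour through $\langle 2\xi(\tilde x),2\xi(\tilde y)\rangle$ is misplaced here, since for $a(\Delta)\equiv 0\bmod 4$ the paper already records (just before the lemma) that $\Delta^{\tilde x}=\Delta^{\tilde y}=\Delta$, so conjugation by $\tilde z^2$ alone sends $\nu\mapsto -\nu$ and the condition is cleanly $2\nu=0$ in $\Gamma/H$ --- the $2\xi(\tilde x),2\xi(\tilde y)$ business belongs to case~(ii) (where $a\equiv 2\bmod 4$), not to case~(iii).
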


\begin{proof}
We have to calculate the amount of subgroups $\Delta \cong \ZZ^3$ of
index $n$, satisfying $\Delta=\Delta^{\widetilde{z}^2}$, that is
$2\nu(\Delta)=0$. \Cref{number of halfes} provides this number for a
fixed value of $a(\Delta)$, summing over all possible $a(\Delta)$
gives the required.
\end{proof}

\begin{lemma}\label{lemG4-5}
$$|\mathfrak{M}_1|=\sum_{4a \mid n}\tau(\frac{n}{4a})+\sum_{8a \mid n}\tau(\frac{n}{8a}).$$
\end{lemma}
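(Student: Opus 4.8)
The goal is to count the normal subgroups $\Delta \cong \ZZ^3$ of index $n$ in $\pi_1(\mathcal{G}_4)$. By \Cref{propG4-2} these are exactly the subgroups with $a(\Delta) \equiv 0 \bmod 4$, and by the analysis preceding \Cref{lemG4-4} normality is equivalent to $\Delta^{\tilde z} = \Delta$, i.e. to the condition $\nu(\Delta) = \nu(\Delta^{\tilde z})$ on the coset invariant. So I would first work out how conjugation by $\tilde z$ acts on the invariant $\nu(\Delta) \in \Gamma/H(\Delta)$. Writing $Z(\Delta) = h\tilde z^{a}$ with $a = a(\Delta)$ and using the multiplication law \eqref{multlawG4} (with $a \equiv 0 \bmod 4$, so conjugation by $\tilde z^a$ acts trivially on $\Gamma$ but conjugation by $\tilde z$ acts as the order-$4$ automorphism $\tilde x \mapsto \tilde y \mapsto \tilde x^{-1}$, which is precisely the automorphism $\ell$ of \Cref{ell} after identifying $\Gamma \cong \ZZ^2$), a direct computation gives $\nu(\Delta^{\tilde z}) = \ell(\nu(\Delta))$, up to a fixed translation coming from $h$. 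The upshot is that $\Delta$ is normal iff $H(\Delta)$ is $\ell$-invariant and the coset $\nu(\Delta)$ is fixed by $\ell$ in $\Gamma/H(\Delta)$.

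Once this reduction is in place, the count becomes a sum over the allowed values of $a = a(\Delta)$, which are the divisors of $n$ with $a \equiv 0 \bmod 4$, i.e. $a = 4a'$ with $4a' \mid n$. For each such $a$, the subgroup $H = H(\Delta)$ is a sublattice of $\Gamma \cong \ZZ^2$ of index $n/a$ that is invariant under $\ell$, and $\nu$ ranges over the $\ell$-fixed cosets of $\Gamma/H$. But the number of pairs $(H,\nu)$ with $H$ an $\ell$-invariant sublattice of $\ZZ^2$ of index $m$ and $\nu$ an $\ell$-fixed coset is exactly $R(m) = \tau(m) + \tau(\tfrac m2)$ by \Cref{number of quarters}. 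Therefore
$$
|\mathfrak{M}_1| = \sum_{4a \mid n} R\!\left(\frac{n}{4a}\right) = \sum_{4a \mid n}\left(\tau\!\left(\frac{n}{4a}\right) + \tau\!\left(\frac{n}{8a}\right)\right) = \sum_{4a \mid n}\tau\!\left(\frac{n}{4a}\right) + \sum_{8a \mid n}\tau\!\left(\frac{n}{8a}\right),
$$
which is the claimed formula. (In the last step one reindexes the $\tau(n/8a)$ terms: $4a \mid n$ with $n/8a$ a positive integer is the same as $8a \mid n$.)

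The step I expect to be the main obstacle is pinning down the action of conjugation by $\tilde z$ on the coset invariant $\nu(\Delta)$ precisely — in particular checking that the extra translation term contributed by the $\Gamma$-part $h$ of $Z(\Delta)$ does not affect whether $\nu$ is $\ell$-fixed. The cleanest way around this is the trick already used in \Cref{number of halfes-remark} and \Cref{number of quarters-remark}: rather than tracking $h$ explicitly, observe that $\Delta$ is normal iff $\Delta = \Delta^{\tilde z}$ iff the invariant triple is fixed, and then apply \Cref{index_and_ker} with $\phi$ the relevant endomorphism of $\ZZ^2$ (namely $g \mapsto g - \ell(g)$, whose image is $\langle(1,-1),(1,1)\rangle$) to convert "number of $\ell$-fixed cosets of $\Gamma/H$" into $|\ZZ^2/\langle(1,-1),(1,1),H\rangle|$ and sum. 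Everything else — the enumeration of $\ell$-invariant sublattices and their $\ell$-fixed cosets — is already packaged in \Cref{lemG4-1.3} and \Cref{number of quarters}, so modulo that bookkeeping the proof is short.
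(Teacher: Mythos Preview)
Your proposal is correct and follows essentially the same route as the paper's own proof: reduce normality of $\Delta\cong\ZZ^3$ to the conditions $\ell(H(\Delta))=H(\Delta)$ and $\ell(\nu(\Delta))=\nu(\Delta)$, invoke \Cref{number of quarters} to get $R(n/a)=\tau(n/a)+\tau(n/2a)$ for each admissible $a\equiv 0\bmod 4$, and sum. One small simplification: the ``extra translation term contributed by $h$'' that you flag as the main obstacle does not in fact appear here --- since $4\mid a(\Delta)$, conjugation by $\tilde z^{a}$ is trivial on $\Gamma$, so $Z(\Delta)^{\tilde z}=\ell(h)\tilde z^{a}$ and $\nu(\Delta^{\tilde z})=\ell(\nu(\Delta))$ on the nose, with no additive shift; your fallback via \Cref{index_and_ker} is therefore unnecessary (though harmless).
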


\begin{proof}
We have to calculate the amount of subgroups $\Delta \cong \ZZ^3$ of
index $n$ and such that $\Delta=\Delta^{\widetilde{z}}$, that is
$H(\Delta)^{\widetilde{z}}=H(\Delta)$ and
$\nu(\Delta)^{\widetilde{z}}=\nu(\Delta)$. Keep in mind that
$g^{\widetilde{z}}=\ell(g)$ for $g \in \Gamma$ (recall $\ell: \ZZ^2
\to \ZZ^2$ was introduced in \Cref{ell}). Thus \Cref{number of
quarters} provides the number of pairs $(H,\nu)$ for a fixed $a$.
Also \Cref{propG4-2} claims $4\mid a(\Delta)$. Summing
$S(\frac{n}{a})$ over all possible values of $a$ one gets required.
\end{proof}

Summarizing the results of \Cref{th-1-tetracosm} (iii),
\Cref{lemG4-4} and \Cref{lemG4-5} one gets
$$
\begin{aligned} &
c_{\ZZ^3,\pi_1(\mathcal{G}_4)}=\frac{1}{2}\sum_{4a \mid
n}\tau(\frac{n}{4a})+\frac{1}{2}\sum_{8a \mid
n}\tau(\frac{n}{8a})+\frac{1}{4}\sigma_2(\frac{n}{4})+\frac{3}{4}\sigma_2(\frac{n}{8})+\frac{1}{4}\omega(\frac{n}{4})=\\&
\frac{1}{4}\Big(\omega(\frac{n}{4})+\sigma_2(\frac{n}{4})+3\sigma_2(\frac{n}{8})+2\sum_{a\mid\frac{n}{4}}\tau(a)+2\sum_{a\mid\frac{n}{8}}\tau(a)\Big).
\end{aligned}
$$

\subsubsection{Case (ii)}
Let $\Delta$ be a subgroup of index $n$ in
$\pi_1(\mathcal{G}_4)$ isomorphic to $\pi_1(\mathcal{G}_2)$.

\smallskip
{\bf Notation.} By $\xi$ denote the canonical homomorphism $\Gamma
\to \Gamma/H(\Delta)$.
\smallskip

Recall that the subgroup is uniquely defined by an $n$-essential
3-plet. First we have to describe the triplets of all subgroups,
which belongs to the conjugacy class of $\Delta$. Again,
$a(\Delta^g)=a(\Delta),\,g \in \pi_1(\mathcal{G}_4)$. Also,
$H(\Delta^{\tilde{x}})=H(\Delta^{\tilde{y}})=H(\Delta^{\tilde{z}^2})=H(\Delta)$,
thus for arbitrary $g \in \pi_1(\mathcal{G}_4)$ either
$H(\Delta^g)=H(\Delta)$ or $H(\Delta^g)=H(\Delta^{\tilde{z}})$.
$\nu(\Delta^{\tilde{x}})=\nu(\Delta)+2\xi(\tilde{x})$,
$\nu(\Delta^{\tilde{y}})=\nu(\Delta)+2\xi(\tilde{y})$,
$\nu(\Delta^{\tilde{z}^2})=-\nu(\Delta)$. Then the conjugacy class
of $\Delta$ consists of all subgroups, corresponding to 3-plets
$\big(a(\Delta), H(\Delta),
\nu(\Delta)+\langle2\xi(\tilde{x}),2\xi(\tilde{y})\rangle\big)$ and
$\big(a(\Delta), H(\Delta^{\tilde{z}}),
\nu(\Delta^{\tilde{z}})+\langle2\xi(\tilde{x}),2\xi(\tilde{y})\rangle\big)$.

So, to calculate the number of conjugacy classes we need to
calculate the number of pairs consisting of a subgroup $H$ and an
element of $\Gamma/(\langle\tilde{x}^2,\tilde{y}^2,H\rangle$.

Fix some $a(\Delta)=a$. Each conjugacy class corresponds to two
pairs of a subgroup and an element of the factor-group, unless this
two pairs coincide. Analogous to the previous case, let
$\mathfrak{L}_1(a)$ be the family of defined above pairs, such that
$a(\Delta)=a$ and one pair form a conjugacy class, and
$\mathfrak{L}_2(a)$ be the family of pairs, such that $a(\Delta)=a$
and two pairs form a conjugacy class. By $\mathfrak{L}_1$ and
$\mathfrak{L}_2$ denote the union of $\mathfrak{L}_1(a)$ and
$\mathfrak{L}_2(a)$ over all values of $a$ respectively. Certainly,
\begin{equation}\label{eqG4-1}
c_{\pi_1(\mathcal{G}_2),\pi_1(\mathcal{G}_4)}(n)=|\mathfrak{L}_1|+\frac{|\mathfrak{L}_2|}{2}=\frac{|\mathfrak{L}_1|}{2}+\frac{|\mathfrak{L}_1|+|\mathfrak{L}_2|}{2}.
\end{equation}

\begin{lemma}\label{lemG4-2-1}
$$
|\mathfrak{L}_1|+|\mathfrak{L}_2|=\sigma_2(\frac{n}{2})+2\sigma_2(\frac{n}{4})-3\sigma_2(\frac{n}{4}).
$$
\end{lemma}

\begin{proof}
Consider integer $a$, such that $a\equiv 2 \mod 4$ and $a\mid n$.
\Cref{number of halfes} and \Cref{number of halfes-remark} yields
$|\mathfrak{L}_1(a)|+|\mathfrak{L}_2(a)|=\sigma_1(\frac{n}{a})+3\sigma_1(\frac{n}{2a})$.
If $a$ does not satisfy the above condition, then
$|\mathfrak{L}_1(a)|=|\mathfrak{L}_2(a)|=0$.

Summing over all values of $a$ one gets
$|\mathfrak{L}_1|+|\mathfrak{L}_2|=\sum_{2a\mid n,4\nmid
2a}\Big(\sigma_1(\frac{n}{2a})+3\sigma_1(\frac{n}{4a})\Big)=\sum_{2a\mid
n}\Big(\sigma_1(\frac{n}{2a})+3\sigma_1(\frac{n}{4a})\Big)-\sum_{4a\mid
n}\Big(\sigma_1(\frac{n}{4a})+3\sigma_1(\frac{n}{8a})\Big)=\sigma_2(\frac{n}{2})+2\sigma_2(\frac{n}{4})-3\sigma_2(\frac{n}{4})$.
\end{proof}

\begin{lemma}\label{lemG4-2-2}
$$
|\mathfrak{L}_1|=\sum_{a\mid \frac{n}{2}}
\tau(\frac{n}{2a})-\sum_{a\mid \frac{n}{8}} \tau(\frac{n}{8a}).
$$
\end{lemma}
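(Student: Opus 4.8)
The plan is to unwind the description of $\mathfrak L_1$ given in the paragraph preceding the lemma and then reduce the counting to \Cref{number of quarters}. Recall that an element of $\mathfrak L_1$ is a pair $(H,\nu)$, with $H\leqslant\Gamma\cong\ZZ^2$ of index $n/a$ and $\nu$ a coset in $\Gamma/\langle\tilde x^{2},\tilde y^{2},H\rangle$, whose associated conjugacy class is a single orbit; this happens exactly when the pair is fixed under conjugation by $\tilde z$. On $\Gamma$ conjugation by $\tilde z$ acts as the automorphism $\ell$ of \Cref{ell}, and $\ell$ preserves $\langle\tilde x^{2},\tilde y^{2}\rangle=\langle(2,0),(0,2)\rangle$, so the condition reads $\ell(H)=H$ together with $\nu$ being fixed by $\ell$ in $\Gamma/\langle(2,0),(0,2),H\rangle$; by \Cref{propG4-2} only $a\equiv 2\bmod 4$ contribute. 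Hence $|\mathfrak L_1|=\sum_{a\mid n,\ a\equiv 2\,(4)}N(n/a)$, where for $m=n/a$ I write $N(m)$ for the number of pairs $(H,\nu)$ with $H$ an $\ell$-invariant subgroup of index $m$ in $\ZZ^{2}$ and $\nu$ an $\ell$-fixed coset of $\ZZ^{2}/\langle(2,0),(0,2),H\rangle$.

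The first real step is to identify $N(m)$ with $R(m)$ of \Cref{number of quarters}. For an $\ell$-invariant subgroup $K\leqslant\ZZ^{2}$ of finite index, the number of $\ell$-fixed elements of $\ZZ^{2}/K$ is the cardinality of the kernel of $\ell-\mathrm{id}$ acting on $\ZZ^{2}/K$; and $\phi:=\ell-\mathrm{id}$ is an endomorphism of $\ZZ^{2}$ with $\phi(\ZZ^{2})=\langle(1,-1),(1,1)\rangle$ (of index $2$) preserving $K$, so by \Cref{index_and_ker} this cardinality equals $|\ZZ^{2}/\langle(1,-1),(1,1),K\rangle|$. Applying this once with $K=H$ and once with $K=\langle(2,0),(0,2),H\rangle$, and using $(2,0)=(1,1)+(1,-1)$ and $(0,2)=(1,1)-(1,-1)$, both computations yield the same value $|\ZZ^{2}/\langle(1,-1),(1,1),H\rangle|$. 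Thus the number of $\ell$-fixed cosets of $\ZZ^{2}/\langle(2,0),(0,2),H\rangle$ equals the number of $\ell$-fixed cosets of $\ZZ^{2}/H$ (this is the assertion of \Cref{number of quarters-remark}), and summing over $\ell$-invariant $H$ of index $m$ gives $N(m)=R(m)=\tau(m)+\tau(m/2)$ by \Cref{number of quarters}.

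It then remains to evaluate $\sum_{a\mid n,\ a\equiv 2\,(4)}R(n/a)$. Writing $a=2b$ with $b$ odd turns this into $\sum_{2b\mid n,\ b\ \mathrm{odd}}R(n/2b)=\sum_{b\mid n/2}R(n/2b)-\sum_{c\mid n/4}R(n/4c)$. Reindexing, $\sum_{b\mid n/2}R(n/2b)=\sum_{d\mid n/2}R(d)=\sum_{d\mid n/2}\tau(d)+\sum_{d\mid n/4}\tau(d)$, where the last sum collects the $\tau(d/2)$ contributions; similarly $\sum_{c\mid n/4}R(n/4c)=\sum_{d\mid n/4}\tau(d)+\sum_{d\mid n/8}\tau(d)$. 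The two copies of $\sum_{d\mid n/4}\tau(d)$ cancel, leaving $\sum_{d\mid n/2}\tau(d)-\sum_{d\mid n/8}\tau(d)$, which (substituting $d\leftrightarrow n/2a$ and $d\leftrightarrow n/8a$) is exactly $\sum_{a\mid n/2}\tau(n/2a)-\sum_{a\mid n/8}\tau(n/8a)$.

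The one genuinely delicate point is the middle step: seeing that coarsening the quotient from $\ZZ^{2}/H$ to $\ZZ^{2}/\langle(2,0),(0,2),H\rangle$ does not change the number of $\ell$-fixed classes. Once \Cref{index_and_ker} is applied to $\phi=\ell-\mathrm{id}$ this comes out cleanly, and everything else is divisor-sum bookkeeping entirely parallel to the proofs of \Cref{lemG4-5} and \Cref{lemG4-2-1}.
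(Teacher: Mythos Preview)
Your proof is correct and follows the same route as the paper's: establish $|\mathfrak{L}_1(a)|=R(n/a)=\tau(n/a)+\tau(n/2a)$ for $a\equiv 2\pmod 4$ via \Cref{number of quarters}, then carry out the divisor-sum telescoping. You in fact supply the step the paper only gestures at with ``similar to \Cref{lemG4-5}'': the application of \Cref{index_and_ker} to $\phi=\ell-\mathrm{id}$ together with $\langle(2,0),(0,2)\rangle\subset\langle(1,-1),(1,1)\rangle$ is precisely what is needed to see that passing from $\ZZ^2/H$ to the coarsened quotient $\ZZ^2/\langle(2,0),(0,2),H\rangle$ does not change the number of $\ell$-fixed cosets.
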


\begin{proof}
We claim $|\mathfrak{L}_1(a)|=\tau(\frac{n}{a}) +
\tau(\frac{n}{2a})$ if $a\equiv 2 \mod 4$, $|\mathfrak{L}_1(a)|=0$
otherwise. The proof is similar to \Cref{lemG4-5}. Summing over all
values of $a$ we get $|\mathfrak{L}_1|=\sum_{2a \mid n, 4\nmid
2a}\Big(\tau(\frac{n}{2a}) + \tau(\frac{n}{4a})\Big)=\sum_{2a \mid
n}\Big(\tau(\frac{n}{2a}) + \tau(\frac{n}{4a})\Big)-\sum_{4a \mid
n}\Big(\tau(\frac{n}{4a}) + \tau(\frac{n}{8a})\Big)=\sum_{a\mid
\frac{n}{2}} \tau(\frac{n}{2a})-\sum_{a\mid \frac{n}{8}}
\tau(\frac{n}{8a})$.
\end{proof}

Substituting \Cref{lemG4-2-1} and \Cref{lemG4-2-2} to
(\ref{eqG4-1}), obtain
$$
\begin{aligned} &
c_{\pi_1(\mathcal{G}_2),\pi_1(\mathcal{G}_4)}(n)=\frac{1}{2}\Big(\sigma_2(\frac{n}{2})+2\sigma_2(\frac{n}{4})-3\sigma_2(\frac{n}{4})+\sum_{a\mid
\frac{n}{2}} \tau(\frac{n}{2a})-\sum_{a\mid \frac{n}{8}}
\tau(\frac{n}{8a})\Big)=\\&\frac{1}{2}\Big(\sigma_2(\frac{n}{2})+2\sigma_2(\frac{n}{4})-3\sigma_2(\frac{n}{4})+\sum_{a\mid
\frac{n}{2}} \tau(a)-\sum_{a\mid \frac{n}{8}} \tau(a)\Big).
\end{aligned}
$$

\subsubsection{Case (i)}
Let $\Delta$ be a subgroup of index $n$ in $\pi_1(\mathcal{G}_4)$
isomorphic to $\pi_1(\mathcal{G}_4)$. The proof is analogous to case
(ii).

For an odd $a\mid n$ the number of conjugacy classes of subgroups
$\Delta$ such that $a(\Delta)=a$ is equal to $\tau(\frac{n}{a})$ if
$\frac{n}{a}$ is odd and is equal to $2\tau(\frac{n}{a})$ if
$\frac{n}{a}$ is even. Since $\tau(\frac{n}{2a})=\tau(\frac{n}{a})$
if $\frac{n}{a}$ is even and $\tau(\frac{n}{2a})=0$ if $\frac{n}{a}$
is odd, we get
$$
\aligned &
c_{\pi_1(\mathcal{G}_4),\pi_1(\mathcal{G}_4)}(n)=\sum_{a\mid n,
2\nmid a}\Big(\tau(\frac{n}{a})+\tau(\frac{n}{2a})\Big)=\sum_{a\mid
n}\Big(\tau(\frac{n}{a})+\tau(\frac{n}{2a})\Big) - \sum_{2a\mid
n}\Big(\tau(\frac{n}{2a})+\tau(\frac{n}{4a})\Big)=\\&\sum_{a\mid
n}\tau(\frac{n}{a}) - \sum_{a\mid
\frac{n}{4}}\tau(\frac{n}{4a})=\sum_{a\mid n}\tau(a) - \sum_{a\mid
\frac{n}{4}}\tau(a).
\endaligned
$$

\section*{Acknowledgment}
The authors are grateful to V. A. Liskovets, R. Nedela, M.~Shmatkov
for helpful discussions during the work of this paper. We also thank
the unknown referee for his/her valuable comments and suggestions.

\section*{Appendix 1}
The numerical values of functions  given by Theorems 1--4 for small
argument are represented in the following table.\smallskip

\noindent \begin{tabular}{|c|clllllllllllllll|}
\hline \textbf{n }&1&2&3&4&5&6&7&8&9&10&11&12&13&14&15&16\\
\cline{1-17}
\textbf{$s_{\pi_1(\mathcal{G}_2),\pi_1(\mathcal{G}_2)}(n)$} &1 &6 &13 &28 &31 &78 & 57 & 120 & 130 & 186 & 133 & 364 & 183& 342& 403& 496\\
\cline{1-17}
\textbf{$c_{\pi_1(\mathcal{G}_2),\pi_1(\mathcal{G}_2)}(n)$} &1 &6 &5 &16 &7 &30 & 9 & 36 & 18 & 42 & 13 & 80 & 15& 54& 35& 76\\
\cline{1-17}
\textbf{$s_{\ZZ^3,\pi_1(\mathcal{G}_2)}(n)$} & &1 & &7 & &13 &  & 35 &  & 31 &  & 91 & & 57& & 155\\
\cline{1-17}
\textbf{$c_{\ZZ^3,\pi_1(\mathcal{G}_2)}(n)$} & &1 & &7 & &9 &  & 29 &  & 19 &  & 63 & & 33& & 107\\
\cline{1-17}
\textbf{$s_{\pi_1(\mathcal{G}_4),\pi_1(\mathcal{G}_4)}(n)$} &1 &2 &1 &4 &11 &2 & 1 & 8 & 10 & 22 & 1 & 4 & 27& 2& 11& 16\\
\cline{1-17}
\textbf{$c_{\pi_1(\mathcal{G}_4),\pi_1(\mathcal{G}_4)}(n)$} &1 &2 &1 &2 &3 &2 & 1 & 2 & 2 & 6 & 1 & 2 & 3& 2& 3& 2\\
\cline{1-17}
\textbf{$s_{\pi_1(\mathcal{G}_2),\pi_1(\mathcal{G}_4)}(n)$} & &1 & &6 & & 13 &  & 28 &  & 31 &  & 78 &  & 57 & &120\\
\cline{1-17}
\textbf{$c_{\pi_1(\mathcal{G}_2),\pi_1(\mathcal{G}_4)}(n)$} & &1 & &4 &  & 3 &  & 9 &   & 5   &  & 16 &  & 5&  & 19\\
\cline{1-17}
\textbf{$s_{\ZZ^3,\pi_1(\mathcal{G}_4)}(n)$} & & & &1 & & &  & 7 &  &  &  & 13 & & & & 35\\
\cline{1-17}
\textbf{$c_{\ZZ^3,\pi_1(\mathcal{G}_4)}(n)$} & & & &1 & & &  & 5 &  &  &  & 5 & & & & 17\\
\hline \cline{1-17} \hline \cline{1-17}
\end{tabular}
\begin{center}

\small{Table~1}
\end{center}

\bigskip

We note some properties of the above functions. The proofs follows
by direct calculation based on the explicit formulas.

A function $f(n)$ is called {\em multiplicative} if $f(kl)=f(k)f(l)$
for coprime integers $k$, $l$. The functions
$s_{\pi_1(\mathcal{G}_2),\pi_1(\mathcal{G}_2)}(n)$,
$c_{\pi_1(\mathcal{G}_2),\pi_1(\mathcal{G}_2)}(n)$,
$s_{\pi_1(\mathcal{G}_4),\pi_1(\mathcal{G}_4)}(n)$ and
$c_{\pi_1(\mathcal{G}_4),\pi_1(\mathcal{G}_4)}(n)$ are
multiplicative. The following functions are also multiplicative $n
\to s_{\ZZ^3,\pi_1(\mathcal{G}_2)}(2n)$ and $n \to
s_{\ZZ^3,\pi_1(\mathcal{G}_4)}(4n)$.
%
%


\section*{Appendix 2}
As it was mentioned to us by the referee, one of convenient tools to
deal with number-theoretical functions arising in crystallography is
the Dirichlet generating function. See for example \cite{Baake}.
Given a sequence $\{f(n)\}_{n=1}^\infty$, a formal power series
$$
\widehat{f}(s)=\sum_{n=1}^\infty\frac{f(n)}{n^s}
$$
is called the Dirichlet generating function for
$\{f(n)\}_{n=1}^\infty$, see (\cite{Apostol}, Ch. 12). To
reconstruct the sequence $f(n)$ from $\widehat{f}(s)$ one can use
Perron's formula (\cite{Apostol}, Th. 11.17).

Here we present the Dirichlet generating functions for the sequences
$s_{H,G}(n)$ and $c_{H,G}(n)$. Since theorems 1--4 provide the
explicit formulas, the remaining is done by direct calculations.

By $\zeta(s)$ we denote the Riemann zeta function
$\displaystyle\zeta(s)=\sum_{n=1}^{\infty}\frac{1}{n^s}$. Following
\cite{Apostol} note
$$\widehat{\sigma}_0(s) = \zeta^2(s),\quad\widehat{\sigma}_1(s) =\zeta(s)\zeta(s-1),\quad \widehat{\sigma}_2(s) = \zeta^2(s)\zeta(s-1),\quad \widehat{\omega}(s) = \zeta(s)\zeta(s-1)\zeta(s-2).$$

Define sequence $\{\chi(n)\}_{n=1}^\infty$ by
 $\chi(n)=\left\{\begin{aligned}
1  \;\text{if}\; n\equiv 1 \mod 4\\
0  \quad\quad\quad\quad\;\text{if}\; 2\mid n\\
-1  \;\text{if}\; n\equiv 4 \mod 4\\
\end{aligned}
\right. $, and put $\eta(s)=\widehat{\chi}(s)$. Note that $\eta(s)$
is the Dirichlet L-series for the multiplicative character
$\chi(n)$. Then $\widehat{\tau}(s)=\zeta(s)\eta(s)$. In more
algebraic terms,
$$
\widehat{\tau}(s)=\frac{1}{1-2^{-s}}\prod_{p\equiv 1 \mod
4}\frac{1}{(1-p^{-s})^2}\prod_{p\equiv 3 \mod
4}\frac{1}{1-p^{-2s}}\,.
$$
For details see (\cite{Baake} p.~4).

In the following table we provide the Dirichlet generating functions
for the sequences given by Theorems 1--4.
\def\formi{$2^{-s}\zeta(s)\zeta(s-1)\zeta(s-2)$}
\def\formii{$4^{-s}\zeta(s)\zeta(s-1)\zeta(s-2)$}
\def\formiii{$2^{-s-1}\zeta(s)\zeta(s-1)\Big(\zeta(s-2)+
 {(1+3\cdot2^{-s})}\zeta(s)\Big)$}
\def\formiv{$\quad\quad\quad\quad\quad\quad 2^{-2s-2}\zeta(s)\Big(\zeta(s-1)\zeta(s-2)+{(1+3\cdot2^{-s})\zeta(s)\zeta(s-1)+2(1+2^{-s})\zeta(s)\eta(s)}\Big)$}
\def\formv{$\big(1-2^{-s}\big)\zeta(s)\zeta(s-1)\zeta(s-2)$}
\def\formva{$2^{-s}\big(1-2^{-s}\big)\zeta(s)\zeta(s-1)\zeta(s-2)$}
\def\formvi{$\big(1-2^{-s}\big)\big(1+3\cdot2^{-s}\big)\zeta(s)^2\zeta(s-1)$}
\def\formvia{$2^{-s-1}(1-2^{-s})\zeta(s)^2\Big((1+3\cdot2^{-s})\zeta(s-1)+{(1+2^{-s})\eta(s)}\Big)$}
\def\formvii{$\big(1-3^{-s}\big)\zeta(s-1)^2\vartheta(s-1)$}
\def\formviii{$2^{-s}\big(1-3^{-s}\big)\zeta(s-1)^2\vartheta(s-1)$}
\def\formix{$\big(1-3^{-s}\big)\big(1+2\cdot3^{-s}\big)\zeta(s)^2\vartheta(s)$}
\def\formx{$2^{-s}\big(1-3^{-s}\big)\big(1+3^{-s}\big)\zeta(s)^2\vartheta(s)$}
\def\formxi{$\big(1-2^{-s}\big)\zeta(s)\zeta(s-1)\eta(s-1)$}
\def\formxii{$\big(1-2^{-s}\big)\big(1+2^{-s}\big)\zeta(s)^2\eta(s)$}
$$
\begin{array}{|c|c|p{5.8cm}|p{8.2cm}|}\hline
 \multicolumn{2}{|c|}{$\backslashbox{H}{G}$} & $\mathcal{G}_{2}$ & $\mathcal{G}_{4}$ \\ \hline
 \multirow{2}*{$\ZZ^3$} & \widehat{s}_{H,G} & \formi & \formii \\ \cline{2-4}
 & \widehat{c}_{H,G} & \formiii & \formiv  \\ \hline
 \multirow{2}*{$\mathcal{G}_{2}$} & \widehat{s}_{H,G} &\formv  & \formva \\ \cline{2-4}
 & \widehat{c}_{H,G} &\formvi  & \formvia  \\ \hline
 \multirow{2}*{$\mathcal{G}_{4}$} & \widehat{s}_{H,G} & \quad do not exist & \formxi \\ \cline{2-4}
 & \widehat{c}_{H,G} & \quad do not exist & \formxii  \\ \hline
\end{array}
$$
\begin{center}

\small{Table~2}
\end{center}

\end{document}